\documentclass[a4paper,10pt]{article}
\usepackage[utf8x]{inputenc}
\usepackage[namelimits]{amsmath}
\usepackage{amsxtra}
\usepackage[varg]{txfonts}
\usepackage[unicode]{hyperref}
\usepackage[amsmath,hyperref,framed,thmmarks]{ntheorem}
\usepackage{mathrsfs}
\usepackage{graphicx}
%\usepackage{showkeys}

%%%%%%%%%%%%%%%%%%%%%%%%%
% Line numbering
% \usepackage[left,mathlines,pagewise]{lineno}
% \newcommand*\patchAmsMathEnvironmentForLineno[1]{%
% \expandafter\let\csname old#1\expandafter\endcsname\csname #1\endcsname
% \expandafter\let\csname oldend#1\expandafter\endcsname\csname end#1\endcsname
% \renewenvironment{#1}%
% {\linenomath\csname old#1\endcsname}%
% {\csname oldend#1\endcsname\endlinenomath}}%
% \newcommand*\patchBothAmsMathEnvironmentsForLineno[1]{%
% \patchAmsMathEnvironmentForLineno{#1}%
% \patchAmsMathEnvironmentForLineno{#1*}}%
% \AtBeginDocument{%
% \patchBothAmsMathEnvironmentsForLineno{equation}%
% \patchBothAmsMathEnvironmentsForLineno{align}%
% \patchBothAmsMathEnvironmentsForLineno{flalign}%
% \patchBothAmsMathEnvironmentsForLineno{alignat}%
% \patchBothAmsMathEnvironmentsForLineno{gather}%
% \patchBothAmsMathEnvironmentsForLineno{multline}%
% }

%%%%%%%%%%%%%%%%%%%%%%%%%
% Theorems
%\setlength{\FrameSep}{.5ex}
\theoremseparator{.}
\makeatletter
\renewcommand{\newframedtheorem}[1]{%
\theoremprework{\framed\vspace{-1.5ex}}%
\theorempostwork{\vspace{-2ex}\endframed}%
\newtheorem@i{#1}%
}
\makeatother
\newtheorem{theorem}{Theorem}[section]
\newtheorem{proposition}[theorem]{Proposition}
\newtheorem{lemma}[theorem]{Lemma}

\theorembodyfont{\upshape}
\theoremsymbol{$\Diamond$}

\makeatletter
\newtheoremstyle{proof}%
{\item[\theorem@headerfont\hskip\labelsep ##1\theorem@separator]}%
{\item[\ifx\@empty##3\else\theorem@headerfont\hskip \labelsep ##1\ of\ ##3\theorem@separator\fi]}
\makeatother
\theoremheaderfont{\itshape}
\theoremstyle{proof}
\theoremseparator{.}
\theoremsymbol{\ensuremath{\Box}}
\newtheorem{proof}{Proof}
\theoremheaderfont{\sffamily\bfseries}\theorembodyfont{\sffamily\upshape}
\theoremstyle{plain}
\theoremnumbering{alph}
\theoremsymbol{$\Diamond$}
\theoremprework{\small}
\theorempostwork{\normalsize}

%%%%%%%%%%%%%%%%%%%%%%%%%
% Hyperlinks in blue
\hypersetup{
breaklinks=true,
colorlinks=true,
linkcolor=blue,
citecolor=blue,
urlcolor=blue,
}

%%%%%%%%%%%%%%%%%%%%%%%%%
% Paragraph layout
\setlength{\parindent}{0ex}
\setlength{\parskip}{1ex}

%%%%%%%%%%%%%%%%%%%%%%%%%
% boldmath (from http://www.volkerschatz.com/tex/thacks.html)
\def\bfseries{\fontseries \bfdefault \selectfont \boldmath}

%%%%%%%%%%%%%%%%%%%%%%%%%
% Roman numbers for enumerate

%%%%%%%%%%%%%%%%%%%%%%%%%
% fnsymbols by asterisks
\makeatletter
\def\@fnsymbol#1{\ensuremath{\ifcase#1\or *\or **\or {}\or {}\or {**}{**}\else\@ctrerr\fi}}
\makeatother

%%%%%%%%%%%%%%%%%%%%%%%%%
% Brackets
\newcommand{\abs}[1]{\ensuremath{\left|#1\right|}}
\newcommand{\br}[1]{\ensuremath{\left(#1\right)}}

\renewcommand{\d}{\ensuremath{\,\mathrm{d}}}

\newcommand{\norm}[1]{\ensuremath{\left\|#1\right\|}}
\newcommand{\seminorm}[1]{\ensuremath{\left[#1\right]}}
\newcommand{\seq}[1]{\ensuremath{\br{#1}_{\eps>0}}}

\newcommand{\set}[1]{\ensuremath{\left\{#1\right\}}}

\renewcommand{\sp}[1]{\ensuremath{\left\langle#1\right\rangle}}

%%%%%%%%%%%%%%%%%%%%%%%%%
% Equation numbers within section
\numberwithin{equation}{section}

%%%%%%%%%%%%%%%%%%%%%%%%%
% Abbreviations
\renewcommand{\a}{\ensuremath{\alpha}}
\renewcommand{\C}{\ensuremath{\mathbb{C}}}

\DeclareMathOperator{\lip}{lip}
\newcommand{\D}[1]{{#1}(u+w)-{#1}(u)}
\newcommand{\dg}{{\gamma}'}
\newcommand{\dgt}{\gt'}
\renewcommand{\dh}{{h}'}
\newcommand{\dtg}{{\tilde{\gamma}}'}

\newcommand{\E}{\ensuremath{E^{(\a)}}}
\newcommand{\Ee}[1][\eps]{\ensuremath{E_{#1}^{(\a)}}}
\newcommand{\eps}{\ensuremath{\varepsilon}}

\newcommand{\g}{\gamma}
\renewcommand{\G}[1][\g]{\ensuremath{\mathcal G_{{#1}}^{(\a)}}}

\newcommand{\Lp}[1][p]{\ensuremath{L^{\scriptstyle #1}}}
\newcommand{\length}{\ensuremath{\mathscr L}}
\newcommand{\N}{\ensuremath{\mathbb{N}}}

\newcommand{\Q}[1][\a]{Q^{(#1)}}
\newcommand{\R}{\ensuremath{\mathbb{R}}}
\newcommand{\refeq}[2][=]{\ensuremath{\stackrel{\text{\makebox[0cm][c]{\eqref{eq:#2}}}}{#1}}}
\renewcommand{\rho}{\ensuremath{\varrho}}
\newcommand{\RR}[1][\g]{R^{(\a)}}
\newcommand{\rzd}{\ensuremath{(\R/\Z,\R^n)}}
\newcommand{\s}{\ensuremath{\sigma}}

\newcommand{\tQ}[1][\a]{\widetilde{Q}^{(#1)}_\lambda}
\newcommand{\ts}{\ensuremath{{\tilde{\sigma}}}}

\newcommand{\W}[1][(\a+1)/2,2]{\ensuremath{H^{\scriptstyle #1}}}
\newcommand{\Wir}[1][(\a+1)/2,2]{\ensuremath{H_{\mathrm{ir}}^{#1}}}
\newcommand{\Z}{\ensuremath{\mathbb{Z}}}

%%%%%%%%%%%%%%%%%%%%%%%%%
% Title
\title{Stationary Points of O'Hara's Knot Energies}
\author{%
 Simon Blatt\thanks{Mathematics Institute, Zeeman Building, University of Warwick, Coventry CV4 7AL, United Kingdom, \url{S.Blatt@warwick.ac.uk}} \and
 Philipp Reiter\thanks{Abteilung f\"ur Angewandte Mathematik, Universit\"at Freiburg, Hermann-Herder-Stra\ss e 10, 79104 Freiburg i.~Br., Germany, \url{reiter@mathematik.uni-freiburg.de}}%
 \footnote{MSC (2000) 42A45, 53A04, 57M25}%
 \footnote{The final publication is available at \url{www.springerlink.com}.}}

\begin{document}
\maketitle

%\linenumbers

\begin{abstract}
In this article we study the regularity of stationary points of the knot energies~$\E$
introduced by O'Hara in \cite{oha:en,oha:fam-en,oha:en-func}
in the  range $\a\in(2,3)$.
In a first step we prove that $\E$ is $C^1$
on the set of all regular embedded curves belonging to  $\W\rzd$
and calculate its derivative.
After that we use the structure of the Euler-Lagrange equation to study the regularity of
stationary points of $\E$ plus a 
positive multiple of the length. We show that stationary points of finite energy are of class
$C^\infty$ --- so especially all local minimizers of~$\E$ among curves with fixed length 
are smooth.
\end{abstract}
\tableofcontents

\section{Introduction}

The motion of a knotted charged fiber within a viscous liquid
served as model for the definition of so-called \emph{knot energies}
introduced by Fukuhara~\cite{fete}.
One hopes that it will reach a stationary point minimizing its electrostatic energy
and that the resulting shape will help to determine its knot type.
The general idea is that this procedure leads to
a ``nicer shape'' for a given knot in the same knot class, i.~e.\@ a
representative that is as little entangled as possible with preferably large distances between
different strands.

For a general definition and
an outline of different knot energies we refer the reader to O'Hara~\cite{oha:en-kn}.
Recent developements include the investigation of geometric curvature energies such as the integral Menger curvature,
see Strzelecki, Szuma{\'n}ska, and von der Mosel~\cite{SM4,SM5},
which also extends to surfaces~\cite{SM6},
or tangent-point energies~\cite{SM7}
whose domains can be characterized via Sobolev-Slobodeckij spaces~\cite{blatt:tpe}.
Attraction phenomena may also be modeled by a corresponding ``inverse
knot energy'', see Alt et~al.~\cite{alt-felix}
for an example from mathematical biology discussing interaction between pairs of
filaments via cross-linkers.

%Stationary points of knot energies are therefore of particular interest.
In this paper, we investigate stationary points of the most prominent family of knot energies $\E:C^{0,1}\rzd\to[0,\infty]$,
\begin{equation*}
  \g \longmapsto
  \int_{\R/\Z}\int_{-1/2}^{1/2}
  \br{\frac{1}{\abs{\g(u+w)-\g(u)}^\a} - \frac{1}{d_\g(u+w,u)^\a}}
  \abs{\dg(u+w)}\abs{\dg(u)} \d w\d u, \label{eq:a}
\end{equation*}
where $\a\in[2,3)$, which goes back to O'Hara~\cite{oha:en,oha:fam-en,oha:en-func}.
Here $d_\g(u+w,u)$ denotes the intrinsic distance between $\g(u+w)$ and $\g(u)$ on the curve~$\g$.
More precisely,
$d_\g(u+w,u):= \min\big(\length(\g|_{[u,u+w]}),\length(\g)-\length(\g|_{[u,u+w]})\big)$
provided $\abs w\le\tfrac12$ where
$\length(\g) := \int_0^1 \abs{\dg(\theta)}\d\theta$
is the length of~$\g$.

The energy $E^{(2)}$ was thoroughly studied by Freedman, He, and Wang~\cite{fhw}
who coined the name ``M\"obius energy'' due to the M\"obius invariance of this energy.
While the \emph{existence of mimizers} of the M\"obuis energy is ensured in prime knot classes only,
O'Hara~\cite{oha:fam-en,oha:en-func} proved the existence of minimizers within
\emph{any} knot class if $\a\in(2,3)$.
Abrams et~al.~\cite{acfgh} proved that circles are the global minimizers of all these energies 
among all curves.

As to the \emph{regularity of stationary points}, the first result was obtained by He~\cite{he:elghf} for $\a=2$
who initially assuming $H^{2,3}$-regularity obtained $C^\infty$ by a bootstrapping argument.
Together with a purely geometric result by Freedman, He, and Wang~\cite{fhw} heavily relying on the M\"obius invariance, this gives
$C^\infty$-regularity for \emph{all} local $E^{(2)}$-\emph{minimizers} (which exist at least in prime knot classes).
An outline is given in~\cite{reiter:rtm}.
Moreover, He was able to show that under suitable conditions any
planar (i.~e.\@ $n=2$) stationary point of $E^{(2)}$ is a circle
\cite[Thm.~6.3]{he:elghf}. The argument highly relies on the M\"obius
invariance of $E^{(2)}$."

In~\cite{reiter:rkepdc}, parts of these results were carried over to the energies $\E$ for $\alpha \in [2,3)$.
It was shown that stationary points in $H^{\a,2}\cap H^{2,3}$ 
of the energy $\E + \lambda \mathscr L$, where $\mathscr L$ denotes the length functional and $\lambda>0$ is
a constant, are smooth. Here $H^{s,p}$ denote the \emph{Bessel potential spaces}.
Unfortunately, one does not know whether local minimizers
of $\E$ belong to $H^{\a,2}\cap H^{2,3}$ since the techniques used 
by Freedman, He, and Wang~\cite{fhw} to show the 
regularity of local minimizers completely break down in these cases.

In this article we will close this gap by proving a much stronger
result. We will extend the results in~\cite{reiter:rkepdc} and~\cite{oha:en-func}
and prove smoothness of stationary points of the 
functionals $\E + \lambda\mathscr L$ under very natural conditions: We 
will only assume that the curve $\gamma$ we are looking at is parametrized by arc-length
(which means no loss of generality as $\E$ is invariant of parametrization)
and satisfies $\E(\gamma)<\infty$.

The first step to show this result is to extract as much information regarding the regularity of
$\gamma$ out of the finiteness
of the energy $\E$ as possible.
After some partial result~\cite{blatt-reiter} in this
direction, in~\cite{blatt:bre} a classification of all curves with finite energy was given:
An embedded curve parametrized by arc-length
has finite energy $\E$ if and only if it belongs to the fractional Sobolev space $\W$.

Since formulas for the first variation of $\E$ are only known 
under the assumption that $\gamma \in H^2$, we then have to extend these
to injective curves in $H^{(1+\alpha)/2}$ parametrized by arc-length. In fact our 
method even allows us to show that $\E$ is continuously differentiable on this space.
To state the result,
let
\begin{equation*}%\label{eq:Ueps}
 \textstyle U_\eps := \R/\Z \times \br{[-\frac12,-\eps]\cup[\eps,\frac12]}.
\end{equation*}

\begin{theorem} \label{thm:FrechetDifferential}
  Let $\alpha \in (2,3)$. The energies $\E$ are $C^1$-differentiable on the space of all
  injective regular curves $\gamma \in H^{(1+\alpha)/2}$. Furthermore, if $\gamma$
  is parametrized by arc-length, the derivative 
  at $\gamma$ in direction $h$ is given by\footnote{%
  If $\gamma$ belongs to $ H^{\alpha +1}$ one can use partial
  integration to obtain a formula for the $L^2$ gradient like in~\cite[Thm.~2.24]{reiter:rkepdc}.}%
  \begin{align*}
   \delta\E(\gamma;h) = \lim_{\varepsilon \searrow 0}  \iint_{U_\eps} \Bigg( &
    (\a-2) \frac{\sp{\dg(u),\dh(u)}}{\abs w^\a}
    + 2 \frac{\sp{\dg(u),\dh(u)}}{\abs{\D\g}^\a} \\
    &{}-\a \frac{\sp{\D\g,\D h}}{\abs{\D\g}^{\a+2}}
    \Bigg) \d w\d u.
  \end{align*}
\end{theorem}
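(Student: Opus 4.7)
The natural strategy is to regularize: for $\eps>0$, let $\Ee(\g):=\iint_{U_\eps}(\,\cdot\,)\,\d w\,\d u$ denote the truncation of the double integral defining $\E$ away from the diagonal $\set{w=0}$, so that $\E(\g)=\lim_{\eps\searrow0}\Ee(\g)$ on the open set $\mathcal O\subset\W\rzd$ of injective regular curves. Since $(\a+1)/2>3/2$, Sobolev embedding $\W\rzd\hookrightarrow C^1$ makes the evaluation maps $\g\mapsto\g(u)$ and $\g\mapsto\dg(u)$ continuous; together with smoothness of the integrand on $U_\eps$ this implies $\Ee\in C^1(\mathcal O)$ with Gâteaux derivative obtained by differentiation under the integral sign. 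The problem then reduces to (a) computing $\lim_{\eps\to0}\delta\Ee(\g;h)$ and matching it to the stated formula, and (b) ensuring the convergence is uniform in $(\g,h)$ on $\W$-bounded sets, which yields both Fréchet differentiability and continuity of $\delta\E$.

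For (a), under the arc-length assumption the chain rule produces three contributions to the integrand of $\delta\Ee$: differentiation of $\abs{\D\g}^{-\a}$ gives $-\a\sp{\D\g,\D h}/\abs{\D\g}^{\a+2}$; differentiation of $\abs{\dg(u+w)}\abs{\dg(u)}$ yields $\sp{\dg,\dh}(u+w)+\sp{\dg,\dh}(u)$, using $\abs\dg\equiv1$; and differentiation of $d_\g(u+w,u)=\abs w$ yields $-\a\,\mathrm{sign}(w)\abs w^{-\a-1}\int_u^{u+w}\sp{\dg,\dh}\,\d\theta$. The Fubini identity $\int_{\R/\Z}\int_u^{u+w}\sp{\dg,\dh}\,\d\theta\,\d u = w\int_{\R/\Z}\sp{\dg,\dh}\,\d u$ shows that the last expression may be replaced, once integrated against any kernel depending only on $w$, by $\a\abs w^{-\a}\sp{\dg,\dh}(u)$; symmetrization under $(u,w)\leftrightarrow(u+w,-w)$ collapses $\sp{\dg,\dh}(u+w)+\sp{\dg,\dh}(u)$ to $2\sp{\dg,\dh}(u)$. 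Collecting contributions reproduces precisely the coefficients $(\a-2)$, $2$, $-\a$ appearing in the statement.

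The main obstacle is the passage $\eps\to0$ inside $\delta\Ee$. Each of the three summands grows like $\abs w^{-\a}$ near $w=0$ and fails to be in $L^1$ for $\a\in(2,3)$; integrability of the sum relies on the leading-order cancellation $(\a-2)+2-\a=0$ combined with the arc-length expansion $\abs{\D\g}^2=w^2+o(w^2)$. To turn this heuristic into an $\eps$-uniform dominating function, I would Taylor-expand $\abs{\D\g}^{-\a}-\abs w^{-\a}$ and bound the remainder through the fractional seminorm $\seminorm\dg_{H^{(\a-1)/2}}$, similarly expand $\sp{\D\g,\D h}/\abs{\D\g}^{\a+2}$, and invoke the bi-Lipschitz lower bound $\abs{\D\g}\ge c(\g)\abs w$ which holds uniformly on a $\W$-neighborhood of any injective regular curve. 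This produces a majorant of the form $\abs w^{-\a+\s}\psi(u,w)$ with $\psi\in L^1$ and some $\s>0$, whence dominated convergence gives the desired limit. Continuity of $\g\mapsto\delta\E(\g;\cdot)$ then follows from the $\W$-stability of the constants in the dominating estimate along convergent sequences, and the non-arc-length case is reduced to the arc-length one via the parametrization-invariance of $\E$.
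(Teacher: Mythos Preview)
Your computation in part (a) is correct and matches the paper's specialization step: the Fubini identity and the symmetrization under $(u,w)\leftrightarrow(u+w,-w)$ are exactly how the paper passes from the general formula of Proposition~\ref{prop:formula} to the arc-length formula.

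The gap is in part (b). First, the limit in the theorem is a genuine principal value---the paper says so immediately after the statement---so your claimed $L^1$ majorant $|w|^{-\a+\s}\psi(u,w)$ for the full integrand cannot exist; if it did, the integral over $U_0$ would converge absolutely. What \emph{is} true is that after subtracting a bilinear principal-value piece $\Q(\gamma,h)$ as in~\eqref{eq:decomposE}, the remainder $\RR(\gamma,h)$ is absolutely integrable, but $\Q$ itself only converges through cancellation in $w$.

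Second, and this is the main point, the assertion that the convergence $\delta\Ee\to\delta\E$ is uniform on $\W$-bounded sets is false. The paper is explicit: ``the energies $\Ee$ do not form a Cauchy sequence in $C^1(\Wir)$---actually not even in $C^0(\Wir)$, basically due to the fact that bounded sequences in $L^1$ are not uniformly integrable.'' A bound on $\|\gamma\|_{\W}$ controls the total $\iint|\dg(u+w)-\dg(u)|^2/|w|^\a$ but gives no uniform control on the tail $\iint_{|w|<\eps}(\cdots)$ as $\eps\to0$, and this tail is exactly what governs $|\delta\Ee[\eps_1]-\delta\Ee[\eps_2]|$. The paper's substitute is Lemma~\ref{lem:CauchySequence}: restrict to subsets $X_\delta\subset\W$ satisfying the tail condition~\eqref{eq:uniformintegrability} and prove $\lip_{U\cap X_\delta}(\Ee[\eps_1]-\Ee[\eps_2])\le C\delta$. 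Lines $\{\gamma_0+\tau h:\tau\in(-a,a)\}$ satisfy this with $\delta=0$, which yields existence of directional derivatives; small balls of radius $\delta$ around $\gamma_0$ satisfy it with that $\delta$, and sending $\delta\to0$ gives continuity of the differential. Your phrase ``stability of the constants along convergent sequences'' points in the right direction but does not supply the mechanism: one must trade a smaller neighbourhood of $\gamma_0$ for a controllable error $C\delta$, rather than hope for uniform convergence on a fixed bounded set.
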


Note that this is a principle value integral, i.~e.\@ we may not replace $U_\eps$ by $U_0$.

Now we are in the position to state the main result of this article.

\begin{theorem} \label{thm:rsp}
 Let $\alpha \in (2,3)$  and $\gamma\in C^{0,1}(\mathbb R / \mathbb Z, \mathbb R^n)$ be a curve
 parametrized by arc-length with $\E(\gamma)< \infty$. If $\gamma$ is furthermore a stationary
 point of $\E + \lambda \mathscr L$, i.~e.\@ if
 \begin{equation*}
  \delta \E(\gamma;h) + \lambda \int_{\mathbb R / \mathbb Z} \sp{\gamma',h'} =0 \quad \forall h \in \W[(1+\alpha)/2]
  (\mathbb R / \mathbb Z, \mathbb R^n),
 \end{equation*}
 then $\gamma \in C^\infty(\mathbb R / \mathbb Z, \mathbb R^n)$.
\end{theorem}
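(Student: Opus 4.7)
The plan is to bootstrap regularity by exploiting the structure of the Euler--Lagrange equation, starting from the best regularity guaranteed by finite energy.

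First I would invoke the characterization of finite-energy curves from~\cite{blatt:bre}: every arc-length parametrized embedding $\gamma$ with $E^{(\alpha)}(\gamma)<\infty$ belongs to $H^{(\alpha+1)/2}(\mathbb R/\mathbb Z,\mathbb R^n)$ and is automatically bi-Lipschitz. In particular, Theorem~\ref{thm:FrechetDifferential} applies, so the stationarity assumption translates into a rigorous weak equation tested against all $h\in H^{(\alpha+1)/2}(\mathbb R/\mathbb Z,\mathbb R^n)$.

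Next I would rewrite $\delta E^{(\alpha)}(\gamma;h)+\lambda\int\langle\gamma',h'\rangle$ as the sum of a linear principal part and a nonlinear remainder. Expanding $|\gamma(u+w)-\gamma(u)|^{-\alpha}$ around its leading-order behavior $|w|^{-\alpha}$ (which is legitimate because $\gamma$ is bi-Lipschitz and parametrized by arc-length) and combining the three summands from Theorem~\ref{thm:FrechetDifferential}, the Euler--Lagrange equation takes the schematic form
\[
Q^{(\alpha)}\gamma = R(\gamma)\quad\text{modulo constants and lower-order terms,}
\]
where $Q^{(\alpha)}$ is a nonlocal elliptic operator of order $\alpha+1$, essentially a positive multiple of $(-\Delta)^{(\alpha+1)/2}$, and $R(\gamma)$ is a quadratic nonlinearity built from finite differences of $\gamma$ and $\gamma'$. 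Arc-length parametrization is essential here: the identity $|\gamma'|\equiv 1$ forces $\langle\gamma',\gamma''\rangle\equiv 0$, which cancels the otherwise most singular contributions in the expansion and is what makes such a clean split possible.

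With this decomposition in hand, the plan is a standard bootstrap in fractional Sobolev scales. Assuming $\gamma\in H^{s}$ for some $s\ge(\alpha+1)/2$, fractional product and commutator estimates should yield $R(\gamma)\in H^{s-(\alpha+1)+\delta}$ for some $\delta>0$; ellipticity of $Q^{(\alpha)}$ (inversion modulo constants) then upgrades $\gamma$ to $H^{s+\delta}$. Iterating drives $s\to\infty$, and Sobolev embedding yields $\gamma\in C^\infty$; alternatively one can stop the bootstrap as soon as $\gamma\in H^{\alpha,2}\cap H^{2,3}$ and then invoke the smoothness result of~\cite{reiter:rkepdc} as a black box.

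The hardest part will be the very first bootstrap step, because the starting regularity $(\alpha+1)/2$ is essentially critical for the nonlinear remainder: a naive application of fractional product estimates would merely reproduce the assumed regularity of $\gamma$. Overcoming this requires a careful paraproduct-type decomposition of $R(\gamma)$ together with systematic use of the cancellations inherited from $|\gamma'|\equiv 1$, so as to gain even an infinitesimal amount of extra regularity in the first iteration. Once such an initial gain is secured, the subsequent iterations become routine since they all take place strictly above the critical exponent.
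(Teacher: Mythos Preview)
Your plan coincides with the paper's: start from $\gamma\in H^{(\alpha+1)/2}$ via the energy-space characterization, split the first variation into an order-$(\alpha+1)$ elliptic bilinear form $Q^{(\alpha)}$ (a Fourier multiplier with symbol $\sim|k|^{\alpha+1}$) plus a remainder $R^{(\alpha)}$, and bootstrap. The one place your assessment diverges from the paper is your diagnosis of the first step: it is \emph{not} critical, and no paraproduct machinery is needed there. Arc-length parametrization (through $|\gamma'|\equiv 1$, not through $\langle\gamma',\gamma''\rangle=0$) yields the algebraic factorization
\[
\frac{1}{|\gamma(u{+}w)-\gamma(u)|^{\beta}}-\frac{1}{|w|^{\beta}}
=\iint_{[0,1]^2}G^{(\beta)}\!\left(\frac{\gamma(u{+}w)-\gamma(u)}{w}\right)
\frac{|\gamma'(u{+}\tau_1 w)-\gamma'(u{+}\tau_2 w)|^{2}}{|w|^{\beta}}\,d\tau_1\,d\tau_2,
\]
with $G^{(\beta)}$ analytic on the relevant range; this makes $R^{(\alpha)}$ manifestly quadratic in first differences of $\gamma'$. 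A plain $L^1$ bound on this integrand (Cauchy--Schwarz and $\gamma'\in L^\infty$, nothing more) already gives $R^{(\alpha)}(\gamma,\cdot)\in(H^{3/2+\varepsilon})^{*}$ at the base regularity, and duality against $Q^{(\alpha)}$ then yields $\gamma\in H^{\alpha-1/2-\varepsilon}$, a gain of $(\alpha-2)/2-\varepsilon>0$ right at the outset. The fractional Leibniz rule enters only in the \emph{later} iterations, where one needs $R^{(\alpha)}(\gamma,\cdot)\in(H^{3/2-\hat\sigma})^{*}$ once $\gamma\in H^{(\alpha+1)/2+\sigma}$. So the real content is not a delicate threshold argument at the start but the quadratic factorization above; with it in hand, every step, including the first, is comfortably subcritical, and the iteration gains a fixed amount $\approx(\alpha-2)/2$ each time.
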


The gradient flow of the M\"obius energy $E^{(2)}$ was first discussed by He~\cite{he:elghf} where 
he states short time existence results for smooth initial data.
In~\cite{blatt:gfm}, the short time existence was proven for all intial data in $C^{2,\beta}$,
$\beta >0$, and first long time existence results for this gradient flow near local minimizers
were derived. 
For a discussion of gradient flow for  $\E+ \lambda\mathscr L$ for positive $\lambda$ and $\a\in(2,3)$
or the gradient flow of $\E$ with respect to fixed length we refer the reader to~\cite{blatt:gfoh}.

The energies $\E$ represent only the one-parameter range $p=1$ of the larger family of knot energies
\begin{equation*}
  E^{\a,p}(\g) :=
  \int_{\R/\Z}\int_{-1/2}^{1/2}
  \br{\frac{1}{\abs{\g(u+w)-\g(u)}^\a} - \frac{1}{d_\g(u+w,u)^\a}}^p
  \abs{\dg(u+w)}\abs{\dg(u)} \d w\d u, \label{eq:a-p}
\end{equation*}
where $\a p\ge2$ and $(\a-2)p<1$, see O'Hara~\cite{oha:fam-en,oha:en-func} and~\cite{blatt:bre,blatt-reiter}.
We do \emph{not} expect that our results or the results for the gradient flow of the energies
carry over to $p>1$ as we expect the 
first variation to be a degenerate elliptic operator in this case.

Let us close this introduction by briefly introducing some notation and the
\emph{Sobolev spaces of fractional order} which are also referred to as \emph{Bessel potential spaces}.
For $s\in\R$ and $p\in[1,\infty]$ let
$H^{s,p} := (\mathrm{id}-\Delta)^{-s/2} L^p$ where $\Delta$ denotes the Laplacian.
There are several equivalent definitions, e.~g.\@ by
interpolation. In case $p=2$, which mainly applies to our situation,
the Bessel potential spaces coincide with the Slobodeckij spaces.
This gives rise to the following fundamental characterization of $H^{s,2}$, $s\in(0,\infty)\setminus\N$.

Let $f\in L^2\rzd$. For $s\in(0,1)$ we define the seminorm
\[ \seminorm{f}_{H^{s,2}} := \br{\int_{\R/\Z}\int_{-1/2}^{1/2} \frac{\abs{f(u+w)-f(u)}^2}{\abs w^{1+2s}} \d w\d u}^{1/2}. \]
Then the Sobolev space $H^{k+s,2}\rzd$, $k\in\N\cup\set0$,
is the set of all functions $H^{k,2}\rzd$ for which the norm
\[ \norm f_{H^{s+k,2}} := \norm f_{H^{k,2}} + \seminorm{f^{(k)}}_{H^{s,2}} \]
is finite.
We will frequently use the embedding
\begin{equation}\label{eq:embedding}
 H^{k+s,2}\rzd\hookrightarrow C^{k+s-1/2}\rzd, \qquad s\in(\tfrac12,1),
\end{equation}
see, e.~g., Taylor~\cite[Chap.~4, Prop.~1.5]{taylor}.
For further information on Sobolev spaces we refer to the books by Grafakos~\cite[Chap.~6]{grafakos:modern},
Runst and Sickel~\cite[Chap.~2]{RS},
and Taylor~\cite[Chap.~4 and~13]{taylor}.

For some space $X\subset C^{1}\rzd$ we will denote by $X_\mathrm{ir}$ the (open) subspace consisting of all
\underline injective (embedded) and \underline regular curves in~$X$.

The standard scalar product in~$\R^n$ is denoted by $\sp{\cdot,\cdot}$, for complex vectors $a,b\in\C^n$ we define
$\sp{a,b}_{\C^n}:=\sum_{k=1}^da_k\overline{b_k}$. The $L^2$-scalar product is, as usual, given by
$\sp{f,g}_{L^2}:=\int_0^1\sp{f(u),g(u)}\d u$.

Unless stated otherwise, we will assume
\[ \a\in(2,3) \]
throughout this paper.

\paragraph{Acknowledgements.}
The first author was supported by the Swiss National Science Foundation Grant Nr.~200020\_125127 and
the Leverhulm trust.
The second author was supported by DFG Transregional Collaborative Research Centre SFB~TR~71.
\section{Continuous differentiability}\label{sect:differentiability}

In this section, we want to prove the following proposition from which Theorem~\ref{thm:FrechetDifferential}
will follow quite easily. Recall that
$U_\varepsilon = (\mathbb R / \mathbb Z )\times ([-1/2, - \varepsilon] \cup [\varepsilon, 1/2])$.
\newcommand{\gt} {\gamma + \tau h}

\begin{proposition}\label{prop:formula}
  For $\alpha \in (2,3)$ the energies $\E$ are continuously differentiable
  on $\Wir$. The derivative of $\E$ at $\gamma \in \Wir$ in direction $h \in \W$
  is given by
  \begin{equation}\label{eq:formulaD}
  \begin{split}
    &\delta\E(\g;h)  = \\
    &\lim_{\varepsilon \searrow 0} \iint\limits_{U_\eps} \Bigg\{
    2\left(
      \frac 1 {\abs{\D\g}^\a}-\frac 1 {d_\g(u+w,u)^\a} \right)
      \left\langle \frac{\dg(u)}{\abs{\dg(u)}^2},\dh(u) \right\rangle 
	 %\abs{\dg(u+w)} \abs{\dg(u)}\d w\d u 
\\
    &\qquad -\a %\iint\limits_{U_\eps} 
      \left( \frac{\sp{\D\g,\D h}}{\abs{\D\g}^{\a+2}}
	-\frac{ \left.\tfrac{\d}{\d\tau}\right|_{\tau=0}d_{\gt}(u+w,u)}
		{d_\g(u+w,u)^{\a+1}} \right)\Bigg\} 
\\ &\hspace{7.6cm} |\dg(u+w)| |\dg(u)|\d w\d u .
  \end{split}
  \end{equation}
\end{proposition}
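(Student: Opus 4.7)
My strategy is to realise $\E$ as the pointwise limit of the truncated, manifestly smooth functionals
\[ \E_\eps(\g) := \iint_{U_\eps} \br{\abs{\D\g}^{-\a} - d_\g(u+w,u)^{-\a}} \abs{\dg(u+w)}\abs{\dg(u)} \d w \d u, \qquad \eps > 0, \]
and to lift this to a $C^1$-convergence statement. For fixed $\eps > 0$ the embedding \eqref{eq:embedding} gives $\Wir \hookrightarrow C^{1}$, and $\Wir$ is itself open in $\W$ since injectivity and regularity are open conditions in $C^1$. Consequently the integrand of $\E_\eps$ is smooth in $\g$ pointwise in $(u,w) \in U_\eps$, with bounds uniform on bounded subsets of $\Wir$; differentiation under the integral sign is therefore legitimate and yields $\E_\eps \in C^1(\Wir, \R)$. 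Applying the product and chain rules to the three factors $\abs{\D\g}^{-\a}$, $d_\g(u+w,u)^{-\a}$, $\abs{\dg(u+w)}\abs{\dg(u)}$ and using the symmetry $w \mapsto -w$ to combine the two contributions coming from the length elements reproduces exactly the integrand appearing under the limit in \eqref{eq:formulaD}, integrated over $U_\eps$.

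To pass to the limit I would invoke the classical principle that if $C^1$ functionals $F_\eps$ on an open subset $V$ of a Banach space converge pointwise to $F$ and their derivatives $\delta F_\eps$ converge to some continuous $L\colon V \to V^*$ uniformly on bounded subsets of $V$ (and uniformly for $\norm h \le 1$), then $F \in C^1(V, \R)$ with $\delta F = L$. Pointwise convergence $\E_\eps(\g) \to \E(\g)$ for every $\g \in \Wir$ is immediate from monotone convergence together with the finiteness result $\E(\g) < \infty$ on $\Wir$ of~\cite{blatt:bre}. The proposition therefore reduces to showing that $\delta\E_\eps(\g; h)$ converges to the right-hand side of \eqref{eq:formulaD} uniformly for $(\g, h)$ in bounded subsets of $\Wir \times \W$, and that the limit depends continuously on $\g$.

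The heart of the proof lies in the analysis of the integrand of \eqref{eq:formulaD} near the diagonal $w = 0$. Around any $\g_0 \in \Wir$ there is a neighbourhood on which the bi-Lipschitz bound $c\abs w \le \abs{\D\g}, d_\g(u+w,u) \le C\abs w$ holds uniformly. The first summand in the integrand is pointwise dominated by a constant multiple of the energy density $\abs{\D\g}^{-\a} - d_\g(u+w,u)^{-\a}$, because the prefactor $\sp{\dg/\abs{\dg}^2, \dh}\abs{\dg(u+w)}\abs{\dg(u)}$ is uniformly bounded via the $C^1$-embedding, and it is therefore integrable by the finiteness of $\E$ on $\Wir$. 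The second summand is delicate: both $\sp{\D\g,\D h}/\abs{\D\g}^{\a+2}$ and $(\partial_\tau|_{\tau=0} d_{\g+\tau h}(u+w,u))/d_\g(u+w,u)^{\a+1}$ individually exhibit the non-integrable singularity $\abs w^{-\a}\sp{\dg,\dh}/\abs{\dg}^{\a+2}$ as $w \to 0$, and only their difference enters. I would carry out matched Taylor expansions of both terms, extract the common leading singularity, and bound the two remainders in terms of the $H^{(\a-1)/2,2}$-seminorm of $\dg$ and $\dh$. The cancellation should provide an $L^1$ dominating function on a neighbourhood of $\g_0$, at which point dominated convergence yields both the uniform convergence of $\delta\E_\eps$ and the continuity of the limit in $\g$.

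The main obstacle is precisely this principal-value cancellation for the second summand: the leading singularities of its two constituents must be extracted \emph{quantitatively}, with remainder bounds uniform over a neighbourhood of $\g_0$ in $\Wir$ and uniform for $\norm h_\W \le 1$. Once such a matched expansion is in hand, the rest of the proof is a routine application of the standard $C^1$-limit principle described above.
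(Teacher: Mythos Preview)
Your outline shares the paper's starting point --- differentiate the truncated functionals $\E_\eps$ and pass to the limit --- but the mechanism you propose for the limit step does not work. You plan to find an $L^1$ dominating function for the integrand of $\delta\E_\eps$, valid uniformly over a neighbourhood of $\gamma_0$ in $\Wir$, and then invoke dominated convergence to obtain uniform convergence of $\delta\E_\eps$ on that neighbourhood. No such dominating function exists. After the principal-value cancellation, the integrand is controlled by quantities like $\abs{\dg(u+\theta_1 w)-\dg(u+\theta_2 w)}^2/\abs{w}^\alpha$; for each fixed $\gamma\in\Wir$ this is integrable with total mass $\lesssim[\dg]_{H^{(\alpha-1)/2}}^2$, but as $\gamma$ ranges over any open ball in $\W$ these densities form a bounded subset of $L^1$ that is \emph{not} uniformly integrable (there is no reason the mass near $w=0$ cannot concentrate). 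The paper states this obstruction explicitly: the $\E_\eps$ fail to be Cauchy even in $C^0(\Wir)$, let alone in $C^1$, precisely because bounded $L^1$-families need not be uniformly integrable. Hence the ``classical $C^1$-limit principle'' you quote, which requires locally uniform convergence of the derivatives, is not available here.

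The paper's workaround is to replace uniform convergence by a Lipschitz estimate
\[
\limsup_{\eps_1,\eps_2\to0}\ \lip_{U\cap X_\delta}\bigl(\E_{\eps_1}-\E_{\eps_2}\bigr)\le C\delta
\]
on carefully chosen subsets $X_\delta\subset\W$ satisfying the near-uniform-integrability condition
\[
\limsup_{\eps\to0}\ \sup_{\gamma\in X_\delta}\Bigl(\iint_{\R/\Z\times[-\eps,\eps]}\frac{\abs{\dg(u+w)-\dg(u)}^2}{\abs{w}^\alpha}\,dw\,du\Bigr)^{1/2}\le\delta.
\]
Two choices do the work: the one-parameter family $X_0=\{\gamma_0+\tau h:\abs\tau<a\}$, on which the estimate with $\delta=0$ yields genuine $C^1$-convergence of $\tau\mapsto\E_\eps(\gamma_0+\tau h)$ and hence existence of directional derivatives; and small balls $X_\delta=\{\gamma:\norm{\gamma-\gamma_0}_{\W}\le\delta\}$, on which the estimate with $\delta\to0$ furnishes both boundedness and continuity of $\gamma\mapsto\delta\E(\gamma;\cdot)$. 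The Lipschitz bound itself is obtained not by analysing the variational integrand directly, but by rewriting the energy density via the identity $2\abs a\abs b-2\sp{a,b}=\abs{a-b}^2-\bigl(\abs a-\abs b\bigr)^2$ so that all terms are manifestly controlled by the seminorm above. Your matched-Taylor-expansion idea would establish the pointwise limit of $\delta\E_\eps(\gamma_0;h)$ for fixed $\gamma_0,h$, but to reach continuous differentiability you need this extra layer.
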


Note that since $\gamma \in C^1$ the derivative 
$\left.\tfrac{\d}{\d\tau}\right|_{\tau=0}d_{\gt}(u+w,u)$ is well defined for almost all
$(u,w) \in \mathbb R / \mathbb Z \times [-1/2, 1/2]$. 
More precisely, we can deduce from
\begin{equation*}
 d_\g(u+w,u)= \min\left\{ \mathscr L(\gamma|_{[u,u+w]}),  \mathscr L (\gamma) - \mathscr L(\gamma|_{[u,u+w]})\right\}
\end{equation*}
that 
\begin{multline}\label{eq:dd} 
 \left.\tfrac{\d}{\d\tau}\right|_{\tau=0} d_{\gt} (u+w,u) \\=
 \begin{cases} |w|\int_{0}^1 
    \left\langle \frac {\gamma'(u+\sigma w)}{|\gamma'(u+\sigma w)|} , h'(u+\sigma w) \right\rangle d\sigma,
    &\quad\text{ if } \length(\gamma|_{[u,u+w]}) <\frac 1 2 \length(\gamma),\\
    -|w|\int_{0}^1
    \left\langle  \frac {\gamma'(u+\sigma w)}{|\gamma'(u+\sigma w)|} , h'(u+\sigma w) \right\rangle d\sigma,
    &\quad\text{ if } \length(\gamma|_{[u,u+w]}) > \frac 1 2 \length(\gamma).
    \end{cases}
\end{multline}

To prove Proposition~\ref{prop:formula}, we will first show that the following approximations
of the energy $\E$, in which we cut off the singular part, are continuously differentiable 
and give a formula for the derivative.
For $\varepsilon >0$ we set
\begin{equation*}
  \Ee(\g) := \iint\limits_{ U_\eps} 
    \br{\frac1{\abs{\D\g}^\a}-\frac1{d_\g(u+w,u)^\a}}\abs{\dg(u+w)}\abs{\dg(u)}
    \d w\d u.  
\end{equation*}
To be more precise, we will show that $\Ee$ is $C^1$ on the space of all embedded 
regular curves of class $C^1$, which due to the embedding~\eqref{eq:embedding} 
especially implies the continuous differentiability on $\Wir$.

The general strategy of the proof will be fairly standard. We first derive
a formula for the pointwise variation of the integrand in the definition of $\Ee$
and $\E$ which holds almost everywhere. After that we will carefully analyse this formula
in order to prove that the integrand defines a continuously differentiable map
from $C_{\mathrm{ir}}^{1}(\mathbb R / \mathbb Z, \mathbb R^n)$ to $L^1 (U_{\varepsilon})$.
This allows us to deduce that $\Ee$ is continuously differentiable.

\begin{lemma} \label{lem:FrechetDifferentiable}
The functional $\Ee$ is continuously differentiable on the space
of all injective regular curves in $C^1(\mathbb R / \mathbb Z,\mathbb R^n)$.
The directional derivative
at $\gamma $ in direction $h\in C^1(\mathbb R / \mathbb Z, \mathbb R^n)$ is given by
\begin{equation}\label{eq:FrechetDifferentiable}
\begin{split}
    &\delta\Ee(\g;h)  = \\
    & \iint\limits_{U_\eps} \Bigg\{
    2\left(
      \frac 1 {\abs{\D\g}^\a}-\frac 1 {d_\g(u+w,u)^\a} \right)
      \left\langle \frac{\dg(u)}{\abs{\dg(u)}^2},\dh(u) \right\rangle 
	 %\abs{\dg(u+w)} \abs{\dg(u)}\d w\d u 
\\
    &\qquad -\a %\iint\limits_{U_\eps} 
      \left( \frac{\sp{\D\g,\D h}}{\abs{\D\g}^{\a+2}}
	-\frac{ \left.\tfrac{\d}{\d\tau}\right|_{\tau=0}d_{\gt}(u+w,u)}
		{d_\g(u+w,u)^{\a+1}} \right)\Bigg\} 
\\ &\hspace{7.6cm} |\dg(u+w)| |\dg(u)|\d w\d u .
  \end{split}
\end{equation}
\end{lemma}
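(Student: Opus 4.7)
The plan is to prove the lemma by the classical device of differentiation under the integral sign. The first step is a pointwise calculation: for $(u,w) \in U_\eps$ fixed and $h \in C^1$, the scalar function $\tau \mapsto I_\eps(\g + \tau h)(u,w)$, where
\[
 I_\eps(\g)(u,w) := \br{\frac1{\abs{\D\g}^\a}-\frac1{d_\g(u+w,u)^\a}}\abs{\dg(u+w)}\abs{\dg(u)},
\]
is smooth in a neighbourhood of $\tau=0$. Writing out the product and chain rule and using the identity $\tfrac{\d}{\d\tau}|_{\tau=0}\abs{\g'+\tau h'} = \langle \g'/|\g'|,h'\rangle$ together with \eqref{eq:dd} for $\tfrac{\d}{\d\tau}|_{\tau=0} d_{\gt}$, one finds after regrouping exactly the integrand of \eqref{eq:FrechetDifferentiable}. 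This is a finite, routine calculation and will be the first paragraph of the proof.

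The key analytic input is a uniform bi-Lipschitz bound on $U_\eps$. Since $\g \in C^1_{\mathrm{ir}}$ is injective and regular on the compact torus, there exists $\rho = \rho(\g) > 0$ such that $\abs{\dg(u)}\ge \rho$ for all $u$ and $\abs{\g(u+w)-\g(u)} \ge \rho|w|$ for all $(u,w) \in \R/\Z\times[-\tfrac12,\tfrac12]$: the bound for $|w|$ small follows from the lower bound on $|\dg|$ and uniform continuity of $\dg$, while for $|w|\ge\eps$ one uses compactness and injectivity. By a standard continuity argument the same lower bound (say with $\rho/2$) persists on a $C^1$-neighbourhood of $\g$. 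Since $d_\g(u+w,u) \ge \abs{\D\g}$, this forces both $\abs{\D\g}$ and $d_\g(u+w,u)$ to stay $\ge \rho\eps/2$ on $U_\eps$ uniformly in a $C^1$-neighbourhood of $\g$, which removes every potentially singular denominator occurring in $I_\eps$ or in its formal $\tau$-derivative.

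With these bounds in hand, every factor appearing in the difference quotient $\tau^{-1}(I_\eps(\g+\tau h)-I_\eps(\g))$ can be estimated, via the mean value theorem on $[0,\tau]$, by a constant depending only on $\eps$, $\rho$, $\|\g\|_{C^1}$ and $\|h\|_{C^1}$, uniformly for $\tau$ in a small interval around $0$. Since $U_\eps$ has finite measure, dominated convergence delivers the derivative formula \eqref{eq:FrechetDifferentiable} as a limit of difference quotients, and shows that $h \mapsto \delta\Ee(\g;h)$ is linear and bounded on $C^1$. For the continuity statement one takes $\g_n \to \g$ in $C^1_{\mathrm{ir}}$; the same uniform bounds apply to all $\g_n$ for $n$ large, and the integrand defining $\delta\Ee(\g_n;h) - \delta\Ee(\g;h)$ converges pointwise to zero while remaining dominated, so a final application of dominated convergence yields $\delta\Ee(\g_n;\cdot) \to \delta\Ee(\g;\cdot)$ in the operator norm on $C^1$. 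The embedding $\W \hookrightarrow C^1$ from \eqref{eq:embedding} then promotes everything from $C^1_{\mathrm{ir}}$ to $\Wir$.

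The only non-routine point is the uniform bi-Lipschitz estimate on a $C^1$-neighbourhood; once that is established, the cutoff $|w|\ge \eps$ turns $I_\eps$ into a smooth integrand with uniformly controlled derivatives, and the rest of the argument is pure bookkeeping through dominated convergence. In the subsequent proof of Proposition~\ref{prop:formula} and Theorem~\ref{thm:FrechetDifferential}, the difficulty shifts back to proving that $\Ee \to \E$ in $C^1(\Wir)$ as $\eps \searrow 0$, which will require exploiting the $H^{(1+\a)/2}$-regularity to control the singular part of the integrand.
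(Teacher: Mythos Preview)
Your strategy coincides with the paper's: both fix a $C^1$-neighbourhood of $\gamma$ on which the uniform bi-Lipschitz bounds~\eqref{eq:fundamentalEstimates} hold, and then differentiate under the integral sign (you via dominated convergence on the difference quotient, the paper equivalently by showing the integrand is a $C^1$ map into $L^1(U_\eps)$ and composing with integration).

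There is one genuine gap in your treatment, which the paper handles explicitly. The intrinsic distance is a \emph{minimum}, $d_\gamma(u+w,u)=\min\bigl(\mathscr L(\gamma|_{[u,u+w]}),\,\mathscr L(\gamma)-\mathscr L(\gamma|_{[u,u+w]})\bigr)$, so $\tau\mapsto I_\eps(\gamma+\tau h)(u,w)$ is \emph{not} smooth at those $(u,w)$ where the two branches coincide, i.e.\ where $\mathscr L(\gamma|_{[u,u+w]})=\tfrac12\mathscr L(\gamma)$; formula~\eqref{eq:dd} only holds off this set. Since $\gamma$ is regular, this exceptional set is a compact $C^1$ submanifold of $\R/\Z\times[-\tfrac12,\tfrac12]$ and hence a null set, so pointwise differentiability a.e.\ at $\tau=0$ survives and your dominated-convergence argument still goes through---but your blanket claim of smoothness is false and must be downgraded to an a.e.\ statement. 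The same issue recurs in your continuity argument: as $\gamma_n\to\gamma$ in $C^1$, the sign in~\eqref{eq:dd} may flip for $(u,w)$ near the (moving) exceptional set, so pointwise convergence of the integrand again only holds a.e.; domination is unaffected, so DCT still applies once this is said. A second, minor point: the raw $\tau$-derivative of the integrand produces the symmetric expression $\langle\gamma'(u)/|\gamma'(u)|,h'(u)\rangle|\gamma'(u+w)|+\langle\gamma'(u+w)/|\gamma'(u+w)|,h'(u+w)\rangle|\gamma'(u)|$, not the form in~\eqref{eq:FrechetDifferentiable}; collapsing to $2\langle\gamma'(u)/|\gamma'(u)|^2,h'(u)\rangle|\gamma'(u+w)||\gamma'(u)|$ requires the substitution $(u,w)\mapsto(u+w,-w)$ on $U_\eps$ (what the paper calls a ``suitable reparametrization''), which is more than ``regrouping''.
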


%By the embedding~\eqref{eq:embedding} this statement applies to any $\g\in\Wir\rzd$.

\begin{proof}
Let $\gamma_0 \in C^1(\mathbb R / \mathbb Z, \mathbb R^n)$ be injective and regular and 
$U \subset C^1(\mathbb R / \mathbb Z, \mathbb R^n)$ be an open neighbourhood of $\gamma_0$ such that
there is a constant $c >0$ with
\begin{equation} \label{eq:fundamentalEstimates}
 \min\{|\gamma(u+w) - \gamma(u)|, d_\gamma(u+w,u)\}  \geq c |w|, \quad \quad
 \quad|\gamma'(u)| \geq c
\end{equation}
for all $\gamma \in U$ and $(u,w) \in \mathbb R / \mathbb Z \times [-1/2,1/2]$.

We will show that the integrand used to define the energies $\E$ and $\Ee$, i.~e.\@
\begin{equation*}
 (I\gamma) (u,w) := \left(\frac 1 {|\gamma(u+w) - \g(u)|^\alpha} - \frac 1 {d_\gamma(u+w,u)^\alpha} \right) 
  |\gamma'(u+w)| |\gamma'(u)|,
\end{equation*}
defines a continuously differentiable operator from $U$ into
$L^1(U_\varepsilon)$  for any $\eps>0$ with directional 
derivative
\begin{equation}\label{eq:derivativeOfIntegrand}
\begin{split}
 &\left. \frac {\d}{\d\tau} \left(I(\gamma + \tau h) \right)(u,w) \right|_{\tau =0} \\
  &=
 \left(\frac 1 {|\gamma(u+w) - \g(u)|^\alpha} - \frac 1 {d_\gamma(u+w,u)^\alpha} \right)  
\\
& \qquad \qquad \cdot 
  \left( \left\langle \frac{\gamma'(u)}{|\gamma'(u)|}, h'(u) \right\rangle\abs{\dg(u+w)} +  
  \left\langle \frac{\gamma'(u+w)}{|\gamma'(u+w)|}, h'(u+w) \right\rangle\abs{\dg(u)}\right) \\
  & - \alpha  \left(\frac{\left\langle \D\g ,\D h \right\rangle }{|\D\g|^{\a+2}}
    -\frac{ \left.\frac \d {\d\tau}\right|_{\tau=0}d_{\gt}(u+w,u)}{d_\g(u+w,u)^{\a+1}}\right)
  \abs{\dg(u+w)}\abs{\dg(u)}.
\end{split}
\end{equation}

The statement then follows from the chain rule and the fact that the operator
\begin{gather*}
 L^1(U_\varepsilon) \to \mathbb R, \\
 g \mapsto \iint_{U_\varepsilon} g(u,w) \d u\d w,
\end{gather*}
is continuously differentiable as it is a bounded linear operator.
 
The only non-trivial thing here is to deal with the intrinsic distance $d_{\gamma}$ in the integrand 
that defines $\Ee$. Obviously $d_\gamma(u,w)$ defines a continuous 
operator from $C^1 (\mathbb R / \mathbb Z, \mathbb R^n)$ to 
$L^\infty(\mathbb R / \mathbb Z \times [-1/2, 1/2])$.

Using the fact that one has
\begin{equation*}
 d_\g(u+w,u)= \min\left\{ \mathscr L(\gamma|_{[u,u+w]}),  \mathscr L (\gamma) - \mathscr L(\gamma|_{[u,u+w]})\right\}
\end{equation*}
and that $\gamma$ is regular, one can see that 
\begin{equation}\label{eq:dd=D}
 \left.\tfrac{\d}{\d\tau}\right|_{\tau=0} d_{\gt} (u+w,u) = D(\gamma,h)(u,w)
\end{equation}
for all $u,w$ with $\mathscr L(\gamma|_{[u,u+w]}) \not= \frac 12 {\mathscr L (\gamma)}$ where
\begin{equation}\label{eq:Dgh}
 D(\gamma;h)(u,w) := \begin{cases} |w|\int_{0}^1 
    \left\langle \frac {\gamma'(u+\sigma w)}{|\gamma'(u+\sigma w)|} , h'(u+\sigma w) \right\rangle d\sigma,
    &\quad\text{ if } \mathscr L(\gamma|_{[u,u+w]}) <\frac 1 2 \mathscr L(\gamma),\\
    -|w|\int_{0}^1
    \left\langle  \frac {\gamma'(u+\sigma w)}{|\gamma'(u+\sigma w)|} , h'(u+\sigma w) \right\rangle d\sigma,
    &\quad\text{ if } \mathscr L(\gamma|_{[u,u+w]}) \geq \frac 1 2 \mathscr L(\gamma).
    \end{cases}
\end{equation}

Since $\gamma$ is regular, the set $\mathscr L(\gamma|_{[u,u+w]}) = \frac 1 2 \mathscr L(\gamma)$
is a compact $C^1$ submanifold of 
$\mathbb R/ \mathbb Z\times [-1/2, 1/2]$ and hence a null set. Thus
\eqref{eq:dd=D} and \eqref{eq:derivativeOfIntegrand} hold almost everywhere.

Obviously, $D$ defines a continuous operator from the space
$C_{\mathrm{ir}}^1(\mathbb R / \mathbb Z, \mathbb R^n) \times C^1 (\mathbb R / \mathbb Z, \mathbb R^n)$
to $L^1(U_{\varepsilon})$.

From Equation~\eqref{eq:derivativeOfIntegrand} we can read off that 
\begin{equation*}
 \left(D I (\gamma)(h)\right) := \left. \frac {\d}{\d\tau} I(\gamma + \tau h)(u,w) \right|_{\tau =0}
\end{equation*}
defines an operator $C^1(\R/\Z,\R^n)\to L^1(U_\varepsilon)$ that
continuously depends on $\gamma$.
Hence $I$ is a continuously differentiable operator from $U$ to $L^1(U_\varepsilon)$.

Integrating and using a suitable reparametrization we then derive~\eqref{eq:FrechetDifferentiable}
from \eqref{eq:derivativeOfIntegrand}.
\end{proof} 

Unfortunately, the energies $\Ee$ do not form a Cauchy sequence
in $C^1(\Wir)$ -- actually not even in $C^0(\Wir)$ basically due to the fact that bounded sequences in $L^1$ are not uniformly integrable. 
We will deduce Proposition~\ref{prop:formula} from Lemma~\ref{lem:FrechetDifferentiable}
which roughly speaking shows that $\Ee$ is nearly a Cauchy sequence 
in $C^1(X_{\delta})$ for certain subsets $X_\delta \subset \W[(\alpha +1)/2,2]$, $\delta\ge0$. 
We will allow subsets $X_\delta  \subset \W[(\alpha +1)/2,2]$ which satisfy
the following substitute of the uniform integrability property
\begin{equation} \label{eq:uniformintegrability}
 \limsup_{\varepsilon \rightarrow 0}\sup_{\gamma \in X_\delta} 
  \left(\iint_{\mathbb R / \mathbb Z \times [-\eps, \eps]}
  \frac {|\gamma'(u+w) - \gamma'(u)|^2}{|w|^\alpha} dw du \right) ^{1/2}
\leq \delta.
\end{equation}

Recall that $\displaystyle\lip_Y E = \sup_{\substack{f,\tilde f\in Y\\f\ne\tilde f}}\frac{\abs{E(f)-E(\tilde f)}}{\norm{f-\tilde f}}$
for some functional $E$ with $Y\subset\textrm{domain } E$.

\begin{lemma} \label{lem:CauchySequence}
 Let $\gamma_0 \in \Wir$. Then there is an open neighborhood $U \subset \Wir$ of~$\g_0$
 and a constant $C < \infty$, such that 
 $\Ee$ satisfies
\begin{gather}
%:= 
 %     \sup_{\begin{subarray}{c} \gamma, \tilde \gamma \in U \\ \gamma \not= \tilde \gamma\end{subarray}} 
  %    \frac {|(\E_{\varepsilon_1}(\gamma)- \E_{\varepsilon_2}(\gamma)) -
%	(\E_{\varepsilon_1}(\tilde \gamma)- \E_{\varepsilon_2}(\tilde \gamma))|} {\|\gamma-\tilde \gamma\|_{\W}} 
      \limsup_{\varepsilon_1, \varepsilon_2 \rightarrow 0}\lip_{U \cap X_\delta} (\E_{\eps_1} - \E_{\eps_2})
      \le C \delta\label{eq:CauchySequence2}
\end{gather}
for all subsets $X_\delta \subset \W$ satisfying \eqref{eq:uniformintegrability}
with $\delta \in[0,1]$.
\end{lemma}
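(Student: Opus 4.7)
The plan is to reduce the Lipschitz estimate to an $L^\infty$-bound on the directional derivative of $\Ee[\eps_1]-\Ee[\eps_2]$ via the mean value theorem, and then to exploit a cancellation in the integrand of Lemma~\ref{lem:FrechetDifferentiable} so that the remaining integral (over a thin annulus in $(u,w)$-space) acquires a factor of $\delta$. First I would shrink $U$ if necessary to a convex $H^{(1+\a)/2,2}$-ball around $\g_0$ on which the bi-Lipschitz bounds~\eqref{eq:fundamentalEstimates} hold with a uniform constant $c>0$; this is possible by the embedding~\eqref{eq:embedding}. For $f,\tilde f\in U\cap X_\delta$, set $h:=\tilde f-f$ and $\g_t:=f+th\in U$. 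The fundamental theorem of calculus and Lemma~\ref{lem:FrechetDifferentiable} give
\begin{equation*}
  \abs{(\Ee[\eps_1]-\Ee[\eps_2])(\tilde f)-(\Ee[\eps_1]-\Ee[\eps_2])(f)}
  \le \sup_{t\in[0,1]}\abs{\delta(\Ee[\eps_1]-\Ee[\eps_2])(\g_t;h)},
\end{equation*}
and the right-hand side is the absolute value of the integrand in~\eqref{eq:FrechetDifferentiable} integrated over the annulus $A(\eps_1,\eps_2):=(\R/\Z)\times\set{w\,:\,\min(\eps_1,\eps_2)\le\abs w<\max(\eps_1,\eps_2)}$.

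The technical heart is a pointwise bound for the integrand $J(\g_t,h)(u,w)$ of~\eqref{eq:FrechetDifferentiable} of the form
\begin{equation*}
  \abs{J(\g_t,h)(u,w)}
  \le \frac{C}{\abs w^\a}\,\Omega_{\g_t}(u,w)\,\Omega_h(u,w),
\end{equation*}
where $\Omega_\g(u,w)$ and $\Omega_h(u,w)$ are oscillation quantities dominated by $\int_0^1\int_0^1\abs{\g'(u+\sigma w)-\g'(u+\tau w)}\d\sigma\d\tau$ and its $h$-analog, respectively. This is the main obstacle: each of the two differences inside the braces of~\eqref{eq:FrechetDifferentiable} on its own scales like $\abs w^{-\a}\abs{h'}$, which is not locally integrable for $\a>2$. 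One must exploit the cancellation between $1/\abs{\D\g}^\a$ and $1/d_\g^\a$ (and similarly between $\sp{\D\g,\D h}/\abs{\D\g}^{\a+2}$ and $\left.\tfrac\d{\d\tau}\right|_{\tau=0}d_{\g_\tau}/d_\g^{\a+1}$) by writing $\D\g=w\int_0^1\g'(u+\sigma w)\d\sigma$ and expanding each term around the constant $\g'(u)$, so that the zeroth-order contributions cancel exactly and the first-order remainders combine into the desired bilinear form. The non-smooth dependence of the geodesic-distance term on $\g$ is handled explicitly through~\eqref{eq:dd}.

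Granted the pointwise estimate, Cauchy--Schwarz yields
\begin{equation*}
  \iint_{A(\eps_1,\eps_2)}\frac{\Omega_{\g_t}\,\Omega_h}{\abs w^\a}\d w\d u
  \le\br{\iint_{A(\eps_1,\eps_2)}\frac{\Omega_{\g_t}^2}{\abs w^\a}\d w\d u}^{\!1/2}
  \br{\iint_{A(\eps_1,\eps_2)}\frac{\Omega_h^2}{\abs w^\a}\d w\d u}^{\!1/2}.
\end{equation*}
Because $\g_t$ is a convex combination of $f,\tilde f\in X_\delta$, the triangle inequality together with~\eqref{eq:uniformintegrability} gives $\limsup_{\eps_1,\eps_2\to0}$ of the first factor $\le C\delta$. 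A Fubini-and-rescaling argument (for $\sigma\ne\tau$, substituting $w'=(\sigma-\tau)w$ yields a factor $\abs{\sigma-\tau}^{\a-1}\le1$) bounds the second factor by the Slobodeckij seminorm $\seminorm{h'}_{H^{(\a-1)/2,2}}\le\norm h_{H^{(1+\a)/2,2}}$. Dividing by $\norm{f-\tilde f}_{H^{(1+\a)/2,2}}=\norm h_{H^{(1+\a)/2,2}}$ and taking the supremum over admissible $f,\tilde f,t$ yields~\eqref{eq:CauchySequence2}.
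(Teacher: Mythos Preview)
Your strategy --- reduce to the directional derivative via the mean value theorem on $t\mapsto\g_t$ and then integrate the integrand of~\eqref{eq:FrechetDifferentiable} over the annulus $A(\eps_1,\eps_2)$ --- is a reasonable alternative to the paper's argument, but the pointwise bound you claim,
\[
|J(\g_t,h)(u,w)|\le \frac{C}{|w|^\a}\,\Omega_{\g_t}(u,w)\,\Omega_h(u,w),
\]
is false. Take a point $(u,w)$ at which $h'$ is constant on the arc from $u$ to $u+w$; then $\Omega_h(u,w)=0$, yet the first term in the braces of~\eqref{eq:FrechetDifferentiable},
\[
2\Bigl(\frac1{|\D\g|^\a}-\frac1{d_\g^\a}\Bigr)\Bigl\langle\frac{\g'(u)}{|\g'(u)|^2},h'(u)\Bigr\rangle\,|\g'(u+w)|\,|\g'(u)|,
\]
is generically nonzero, and the second brace does not cancel it. The cancellation between $|\D\g|^{-\a}$ and $d_\g^{-\a}$ yields a factor of order $\Omega_\g^2/|w|^\a$, but the remaining factor $\langle\g'(u)/|\g'(u)|^2,h'(u)\rangle$ is of size $|h'(u)|$, not $\Omega_h$. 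The bound one can actually prove (most transparently by writing the energy integrand as a Lipschitz factor times $\tfrac12\iint|\g'(u+\theta_1 w)-\g'(u+\theta_2 w)|^2-\tfrac12\iint(|\g'(u+\theta_1 w)|-|\g'(u+\theta_2 w)|)^2$ over $|w|^\a$ and differentiating the product) is
\[
|J(\g,h)(u,w)|\le \frac{C}{|w|^\a}\Bigl(\Omega_\g(u,w)\,\Omega_h(u,w)+\Omega_\g(u,w)^2\,\|h'\|_{L^\infty}\Bigr).
\]
With this corrected estimate your argument does go through: the first term gives $C\delta\,\|h\|_{\W}$ via Cauchy--Schwarz as you wrote, and the second gives $C\delta^2\,\|h\|_{\W}\le C\delta\,\|h\|_{\W}$ since $\delta\le1$, using $\|h'\|_{L^\infty}\le C\|h\|_{\W}$.

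For comparison, the paper does not pass through~\eqref{eq:FrechetDifferentiable} at all. It rewrites the integrand of $\E$ (for small $|w|$) in the product form above, with a Lipschitz prefactor $\G$, and then directly estimates the difference $(\Ee[\eps_1]-\Ee[\eps_2])(\tilde\g)-(\Ee[\eps_1]-\Ee[\eps_2])(\g)$ using the Lipschitz bound on $\G$ together with the factorization $a^2-b^2=(a+b)(a-b)$ in the quadratic part. This produces exactly the same two types of contributions (one of size $\delta^2\|\tilde\g-\g\|$ and one of size $\delta\,[\tilde\g'-\g']_{\W[(\a-1)/2]}$) without needing the intermediate curves~$\g_t$.
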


For fixed $\gamma_0 \in \Wir$ and $h \in \W$ we will apply this lemma later to the sets
\begin{equation*}
 X_0 := \{ \gamma_0 + \tau h: \tau \in (-a,a)\}, \qquad 0< a < \infty,
\end{equation*}
and 
\begin{equation*}
 X_\delta := \{ \gamma \in \W : \|\gamma - \gamma_0\|_{\W} \leq \delta\},\qquad\delta>0.
\end{equation*}
Of course we have for $\gamma_\tau := \gamma +\tau h$, $|\tau| \leq a $,
\begin{align}
  \Bigg( \iint_{\mathbb R / \mathbb Z \times [-\eps, \eps]} &
  \frac {|\gamma_\tau'(u+w) - \gamma_\tau'(u)|^2}{|w|^\alpha} dw du \Bigg)^{1/2}  \nonumber \\
  &\leq \left( \iint_{\mathbb R / \mathbb Z \times [-\eps, \eps]}
  \frac {|\gamma'(u+w) - \gamma'(u)|^2}{|w|^\alpha} dw du \right)^{1/2}
  \nonumber\\ 
  & \quad +  a \left( \iint_{\mathbb R / \mathbb Z \times [-\eps, \eps]}
  \frac {|h'(u+w) - h'(u)|^2}{|w|^\alpha} dw du \right)^{1/2} 
  \nonumber\\
  &\rightarrow 0\label{eq:X0}
\end{align}
as $\varepsilon \rightarrow 0$ and
for $\gamma \in X_\delta$ we have
\begin{equation}
  \begin{aligned}
  \Bigg( \iint_{\mathbb R / \mathbb Z \times [-\eps, \eps]} &
  \frac {|\gamma'(u+w) - \gamma'(u)|^2}{|w|^\alpha} dw du \Bigg)^{1/2} \\
  &\leq \left( \iint_{\mathbb R / \mathbb Z \times [-\eps, \eps]}
  \frac {|\gamma_0'(u+w) - \gamma_0'(u)|^2}{|w|^\alpha} dw du \right)^{1/2}
 +  \delta \rightarrow \delta\label{eq:Xdelta}
\end{aligned}
\end{equation}
so both satisfy \eqref{eq:uniformintegrability}.

\begin{proof}

 Using that $\W$ continuously embeds into $C^1$ and $\gamma_0$ is a injective regular curve, we can find
 an open neighborhood $U \subset \Wir$  of $\gamma_0$ and 
 a constant $c >0$ such that~\eqref{eq:fundamentalEstimates} holds
for all $\gamma \in U$ and $(u,w) \in \mathbb R / \mathbb Z \times [-1/2,1/2]$. Making $U$ smaller if
necessary, we can also achieve that there is an $\varepsilon_0 >0$ such that
\begin{equation*}
 d_{\gamma}(u+w,u) = \mathscr L (\gamma|_{[u,u+w]})
\end{equation*}
for all $\gamma \in U$ and  $w \in [-\varepsilon_0, \varepsilon_0]$.
 %% Definiere hier \gt um, wird spaeter wieder geaendert!
 \renewcommand{\gt}{\gamma}%
 Let now $\varepsilon_0 > \varepsilon_2 > \varepsilon_1$ and let us set 
 \[ F^{(\alpha)}:= \E_{\varepsilon_2} - \E_{\varepsilon_1}. \]
 We will now rewrite
 this difference in a more convenient form. For this let us 
 introduce the function
 \[ g^{(\alpha)}(\zeta,\eta,\vartheta,\iota) := \frac{\zeta^{-\alpha}-\eta^{-\alpha}}{\eta^2-\zeta^2}\vartheta\iota \]
 which is Lipschitz continuous and positive on $[\tilde c,\infty)^4$ for any $\tilde c>0$.
 We define for $u\in\R/\Z$, $w\in[-\eps,\eps]$
 \begin{align*}
  \G[\gt]:(u,w) &\mapsto g^{(\alpha)}\br{\abs{\int_0^1\dgt(u+\theta_1w)\d\theta_1},\int_0^1\abs{\dgt(u+\theta_2w)}\d\theta_2,\abs{\dgt(u+w)},\abs{\dgt(u)}}.
 \end{align*}
 We have chosen $U$ in such a way that the arguments in $\G[]$ are 
 uniformly bounded away from zero.

 We decompose the integrand in the definition of $\E$ for $|w| \leq \varepsilon_0$ into
 \begin{align*}
  &\br{\frac1{\abs{\D\gt}^\alpha}-\frac1{d_{\gt}(u+w,u)^\alpha}}\abs{\dgt(u+w)}\abs{\dgt(u)}\\
  &=\frac1{\abs w^\a}\br{\frac1{\abs{\int_0^1\dgt(u+\theta_1w)\d\theta_1}^\alpha}-\frac1{\br{\int_0^1\abs{\dgt(u+\theta_2w)}\d\theta_2}^\alpha}}\abs{\dgt(u+w)}\abs{\dgt(u)}\\
%  &=\iint\limits_{\R/\Z\times[-\eps,\eps]}\G[\gt](u,w)\frac{d_{\gt}(u+w,u)^\alpha - \abs{\D\gt}^\alpha}{\abs w^{2\alpha}} \d w\d u \\
  &=\G[\gt](u,w)\frac{\br{\int_0^1 \abs{\dgt(u+\theta_2w)}\d\theta_2}^2 - \abs{\int_0^1\dgt(u+\theta_1w)\d\theta_1}^2}{\abs w^{\alpha}}  \\
  &=\G[\gt](u,w)\frac{\iint_{[0,1]^2} \br{\abs{\dgt(u+\theta_1w)}\abs{\dgt(u+\theta_2w)} - \sp{\dgt(u+\theta_1w),\dgt(u+\theta_2w)}} \d\theta_1\d\theta_2}{\abs w^{\alpha}}.
 \end{align*}
 Using $2\abs a\abs b-2\sp{a,b} = \abs{a-b}^2 - \abs{\abs a-\abs b}^2$ for $a,b\in\R^n$ 
 this can be written as
 \begin{align*}
  &\tfrac12 \G[\gt](u,w)\frac{\iint_{[0,1]^2}\abs{\dgt(u+\theta_1w)-\dgt(u+\theta_2w)}^2\d\theta_1\d\theta_2}{\abs w^\alpha} \\
  &\quad{}-\tfrac12 \G[\gt](u,w)\frac{\iint_{[0,1]^2}\br{\abs{\dgt(u+\theta_1w)}-\abs{\dgt(u+\theta_2w)}}^2\d\theta_1\d\theta_2}{\abs w^\alpha}.
 \end{align*}
 Hence, 
 \begin{align*}
  &F^{(\alpha)}(\gamma) \\ &=  \tfrac12 \int_{\mathbb R / \mathbb Z}
  \int_{\varepsilon_1 < |w| < \varepsilon_2}\G[\gt](u,w)\frac{\iint_{[0,1]^2}\abs{\dgt(u+\theta_1w)-\dgt(u+\theta_2w)}^2\d\theta_1\d\theta_2}{\abs w^\alpha}\d w\d u \\
  &\quad{}-\tfrac12  \int_{\mathbb R / \mathbb Z}
  \int_{\varepsilon_1 < |w| < \varepsilon_2}\G[\gt](u,w)\frac{\iint_{[0,1]^2}\br{\abs{\dgt(u+\theta_1w)}-\abs{\dgt(u+\theta_2w)}}^2\d\theta_1\d\theta_2}{\abs w^\alpha}\d w\d u \\
  &=: \tfrac12 F_1^{(\alpha)}(\gt) - \tfrac12 F_2^{(\alpha)}(\gt).
 \end{align*}

\renewcommand{\gt}{\tilde\gamma}%
 To estimate the difference $F^{(\alpha)}(\gt)-F^{(\alpha)}(\g)$, we first consider
 \begin{align*}
  &\abs{\G[\gt](u,w)-\G(u,w)} \\
  &\le C\abs{\abs{\int_0^1\dtg(u+\theta_1w)\d\theta_1}-\abs{\int_0^1\dg(u+\theta_2w)\d\theta_2}}
  \\
  & \quad{}+C\abs{\int_0^1\br{\abs{\dgt(u+\theta w)}-\abs{\dg(u+\theta w)}}\d\theta} \\
  &\quad{}+C\abs{\abs{\dgt(u+w)}-\abs{\dg(u+w)}} +C\abs{\abs{\dgt(u)}-\abs{\dg(u)}} \\
  &\le C\int_0^1\abs{\dgt(u+\theta w)-\dg(u+\theta w)}\d\theta +C\abs{{\dgt(u+w)}-{\dg(u+w)}} +C\abs{{\dgt(u)}-{\dg(u)}} \\
  &\le C\norm{\dgt-\dg}_{L^\infty}.
 \end{align*}
 We arrive at
 \begin{align*}
  &\abs{F_1^{(\alpha)}(\gt)-F^{(\alpha)}_1(\g)} \\
  &\le \iint\limits_{\R/\Z\times[-\eps_2,\eps_2]}\abs{\G[\gt](u,w)-\G(u,w)}\frac{\iint_{[0,1]^2}\abs{\dgt(u+\theta_1w)-\dgt(u+\theta_2w)}^2\d\theta_1\d\theta_2}{\abs w^\alpha}\d w\d u \\
  &\quad{}+\iint\limits_{\R/\Z\times[-\eps_2,\eps_2]}\abs{\G[\g](u,w)}\\
  & \qquad \qquad  \frac{\iint_{[0,1]^2}\abs{\abs{\dgt(u+\theta_1w)-\dgt(u+\theta_2w)}^2-\abs{\dg(u+\theta_1w)-\dg(u+\theta_2w)}^2}\d\theta_1\d\theta_2}{\abs w^\alpha} \\
  & \hspace{11.2cm}\d w\d u \\
  &\le C\norm{\dgt-\dg}_{L^\infty}\iint\limits_{[0,1]^2}\iint\limits_{\R/\Z\times[-\eps_2,\eps_2]}\frac{\abs{\dgt(u+\theta_1w)-\dgt(u+\theta_2w)}^2}{\abs w^\alpha}\d w\d u\d\theta_1\d\theta_2 \\
  &\quad{}+C\iint\limits_{[0,1]^2}\iint\limits_{\R/\Z\times[-\eps_2,\eps_2]} \\
  & \qquad  \frac{\abs{(\dgt+\dg)(u+\theta_1w)-(\dgt+\dg)(u+\theta_2w)}\abs{(\dgt-\dg)(u+\theta_1w)-(\dgt-\dg)(u+\theta_2w)}}{\abs w^\alpha} \\
  & \hspace{10.2cm}\d w\d u\d\theta_1\d\theta_2 \\
%   &\le C\abs\tau\sup_{\abs\tau\le\tau_0}\iint\limits_{\R/\Z\times[-\eps,\eps]}\frac{\abs{\dgt(u+w)-\dgt(u)}^2}{\abs w^\alpha}\d w\d u \\
%   &\quad{}+C\abs\tau\sup_{\abs\tau\le\tau_0}\sqrt{\iint\limits_{\R/\Z\times[-\eps_2,\eps_2]}\frac{\abs{(2\dg+\tau h)(u+w)-(2\dg+\tau h)(u)}^2}{\abs w^\alpha}\d w\d u}
%    \sqrt{\iint\limits_{\R/\Z\times[-\eps_2,\eps_2]}\frac{\abs{\dvp(u+w)-\dvp(u)}^2}{\abs w^\alpha}\d w\d u}
  &\le C\seminorm{\dgt}_{\W[(\a-1)/2](\R/\Z\times[-2\eps_2,2\eps_2])}^2\norm{\dgt-\dg}_{L^\infty}  \\
  &\quad + C \seminorm{\dgt+\dg}_{\W[(\a-1)/2](\R/\Z\times[-2\eps_2,2\eps_2])}\seminorm{\dgt-\dg}_{\W[(\a-1)/2](\R/\Z\times[-2\eps_2,2\eps_2])}
 \end{align*}
 where we set for a subset $S\subset\R/\Z\times[-\frac12,\frac12]$
 \begin{equation*}
  \seminorm{f}_{\W[(\a-1)/2](S)}
  := \left( \iint_{S} \frac{\abs{f(u+w)-f(u)}^2}{\abs w^{\a}} \d w\d u\right)^{1/2}.
 \end{equation*}

% where $C=C\br{\norm\dvp_{L^\infty}}$.
 For the second term we compute
 \begin{align*}
  &\abs{F_2^{(\alpha)}(\gt)-F_2^{(\alpha)}(\g)} \\
  &\le \! \! \iint\limits_{\R/\Z\times[-\eps_2,\eps_2]} \! \! \! \!\abs{\G[\gt](u,w)-\G(u,w)}\frac{\iint_{[0,1]^2} \!\! \br{\abs{\dgt(u+\theta_1w)}-\abs{\dgt(u+\theta_2w)}}^2\d\theta_1\d\theta_2}{\abs w^\alpha}\d w\d u \\
  &\qquad{}+\iint\limits_{\R/\Z\times[-\eps_2,\eps_2]}\abs{\G[\gt](u,w)} \cdot\\
  &\qquad \cdot\frac{\iint_{[0,1]^2}\abs{\br{\abs{\dgt(u+\theta_1w)}-\abs{\dgt(u+\theta_2w)}}^2-\br{\abs{\dg(u+\theta_1w)}-\abs{\dg(u+\theta_2w)}}^2}\d\theta_1\d\theta_2}{\abs w^\alpha}\\
  & \hspace{11.2cm} \d w\d u \\
  &\le C\norm{\dgt-\dg}_{L^\infty}\iint\limits_{[0,1]^2}\iint\limits_{\R/\Z\times[-\eps_2,\eps_2]}\frac{\abs{\dgt(u+\theta_1w)-\dgt(u+\theta_2w)}^2}{\abs w^\alpha}\d w\d u\d\theta_1\d\theta_2 \\
  &\quad{}+C\iint\limits_{[0,1]^2}\iint\limits_{\R/\Z\times[-\eps_2,\eps_2]}\frac{\abs{\br{\abs{\dgt(u+\theta_1w)}-\abs{\dgt(u+\theta_2w)}}+\br{\abs{\dg(u+\theta_1w)}-\abs{\dg(u+\theta_2w)}}}}{\abs w^{\alpha/2}}\cdot \\
  &\qquad\quad{}\cdot\frac{\abs{\br{\abs{\dgt(u+\theta_1w)}-\abs{\dgt(u+\theta_2w)}}-\br{\abs{\dg(u+\theta_1w)}-\abs{\dg(u+\theta_2w)}}}}{\abs w^{\alpha/2}}\d w\d u\d\theta_1\d\theta_2 \\
  &\le C\norm{\dgt-\dg}_{L^\infty}\seminorm{\dgt}_{\W[(\a-1)/2](\R/\Z\times[-2\eps_2,2\eps_2])}^2 \\
  &\qquad + C\seminorm{\abs\dgt+\abs\dg}_{\W[(\a-1)/2](\R/\Z\times[-2\eps_2,2\eps_2])}\seminorm{\abs\dgt-\abs\dg}_{\W[(\a-1)/2](\R/\Z\times[-2\eps_2,2\eps_2])}.
 \end{align*}

 Using the chain and product rule for Sobolev spaces and the formula
 \[
  \abs\dgt-\abs\dg 
  = \frac{\sp{\dgt+\dg,\dgt-\dg}}{\abs\dgt+\abs\dg},
 \]
 we get
 \begin{align*}
  \seminorm{\abs\dgt-\abs\dg}_{\W[(\a-1)/2](\R/\Z\times[-2\eps_2,2\eps_2])} 
  & \le \seminorm{\abs\dgt-\abs\dg}_{\W[(\a-1)/2,2](\R/\Z)} \\
  &  \le C \norm{\dgt-\dg}_{\W[(\a-1)/2,2](\R/\Z,\R^n)}
  \end{align*}
 and hence
 \begin{align*}
    \abs{F_2^{(\alpha)}(\gt)-F_2^{(\alpha)}(\g)}
   &\le C \left(\seminorm{\dgt}_{\W[(\a-1)/2](\R/\Z\times[-2\eps_2,2\eps_2])}^2 +
    \seminorm{\abs\dgt+\abs\dg}_{\W[(\a-1)/2](\R/\Z\times[-2\eps_2,2\eps_2])} \right)
   \\
    &\|\dgt-\dg\|_{\W[(\a-1)/2,2](\R/\Z,\R^n)}.
 \end{align*}
 From this the claim follows.
\end{proof}

\begin{proof}[Proposition~\ref{prop:formula}]
 From the classification of all embedded regular curves of finite energy in \cite{blatt:bre}
 we get $\E(\g_0) < \infty$ for all $\g_0 \in \Wir$.
 From this we deduce immediately that $\Ee$ converges to $\E$ pointwise as 
 $\eps$ tends to $0$.

 We begin by proving that \emph{directional derivatives} exist for all directions $h\in\W$.
 Let us fix $\gamma_0 \in \Wir$ and let $U \subset \Wir$ and $C< \infty$ be as in 
 Lemma \ref{lem:CauchySequence}.
 Applying first Lemma~\ref{lem:CauchySequence} with $X_0=\{\gamma_0 + \tau h: \tau \in (-\tau_0,\tau_0)\}$
 for $\tau_0$ small enough, we deduce for
 \[ f_\eps : \tau\mapsto \Ee(\g_0+\tau h) \]
 that
 \begin{align}
  &\abs{f_{\eps_1}'(\tau)-f_{\eps_2}'(\tau)}
  =\abs{\delta\Ee[\eps_1](\g_0+\tau h;h)-\delta\Ee[\eps_2](\g_0+\tau h;h)} \nonumber\\
  &\le\limsup_{\theta\to0}\abs{\frac{\Ee[\eps_1](\g_0+(\tau+\theta)h)-\Ee[\eps_1](\g_0+\tau h)}{\theta}
   - \frac{\Ee[\eps_2](\g_0+(\tau+\theta)h)-\Ee[\eps_2](\g_0+\tau h)}{\theta}} \nonumber\\  
  &\le \lip_{U\cap X_0}\br{\Ee[\eps_1]-\Ee[\eps_2]}\norm h_{\W}\label{eq:f-f}\\
  &\xrightarrow{\eps_1,\eps_2\searrow0}0\qquad\text{by \eqref{eq:X0}.}\nonumber
 \end{align}
 As $\Ee\to\E$ pointwise this proves that $\seq{f_\eps}$
 is a Cauchy sequence in $C^1((-\tau_0, \tau_0))$ converging to $\E(\gamma_0 + \tau h)=\lim_{\eps\searrow0}\Ee(\g_0+\tau h)$ as 
 $ \varepsilon \rightarrow 0$.
 Hence especially all directional derivatives of $\E$ exist and
 \begin{equation*}
  \delta \E (\gamma_0;h) = \lim_{\varepsilon \searrow 0} \delta \Ee (\gamma_0;h)
 \end{equation*}
 for all $\gamma_0 \in \Wir, h \in \W$.
%  for $\tau_0$ small enough, we deduce that 
%  \begin{equation*}
%    \tau \mapsto \Ee(\gamma + \tau h)
%  \end{equation*}
%  is a Cauchy sequence in $C^1((-\tau_0, \tau_0))$ converging to $\E(\gamma + \tau h)$ as 
%  $ \varepsilon \rightarrow 0$.
%  Hence especially all directional derivatives of $\E$ exist and
%  \begin{equation*}
%   \delta \E (\gamma;h) = \lim_{\varepsilon \searrow 0} \delta \Ee (\gamma;h)
%  \end{equation*}
%  for all $\gamma \in \Wir, h \in \W$.

 The next step is to establish \emph{G\^ateaux differentiability}. To this end we merely have to show
 $\delta\E(\g_0,\cdot)\in\br{\W}^*$ for $\g_0\in\Wir$. Linearity carries over from $\Ee$. For boundedness we choose
 $\delta\in(0,1]$ such that
 \begin{equation*}
  X_\delta := \{ \gamma \in \Wir: \|\gamma- \gamma_0\| \leq \delta\} \subset U.
 \end{equation*}
%  Since $X_\delta$ then satisfies \eqref{eq:uniformintegrability} and~\eqref{eq:Xdelta}, we can choose $\varepsilon >0$
%  such that
%  \begin{equation*}
%    \| \E_{\varepsilon_1} - \E_{\varepsilon_2} \|_{L\infty( X_{\delta})} \leq C \delta. 
%  \end{equation*}
%  for all $\eps_1,\eps_2 \leq \eps$.
 Now
 \begin{align*}
  \delta \E(\gamma_0;h) 
  &= \delta \Ee(\gamma_0;h) + \delta \E(\gamma_0;h) - \delta \Ee(\gamma_0;h)
  \\
  &= \delta \Ee(\gamma_0;h) + \lim_{\varepsilon_1 \rightarrow 0} 
   (\delta \E_{\varepsilon_1} (\gamma_0;h) - \delta \Ee(\gamma_0;h))
 \end{align*}
 and thus, arguing as in~\eqref{eq:f-f} and recalling $\delta\Ee(\g_0;\cdot)\in\br{\W}^*$,
\begin{equation*}
\begin{aligned}
  | \delta \E(\gamma_0;h) |& \leq | \delta \Ee(\gamma_0;h) |
  +  \underbrace{\limsup_{\eps_1\searrow0}\lip_{U\cap X_\delta}\br{\Ee[\eps_1]-\Ee[\eps]}}_{<\infty} \|h\|_{\W}
  \\ & \leq C \|h\|_{\W}
\end{aligned}
\end{equation*}
 for all $\gamma_0 \in \Wir$ and $h \in \W$. Hence, $\E$ is G\^ateaux differentiable  and the 
differential $\left(\E\right)'(\gamma_0) \in (\W)^\ast$ is given by
\begin{equation*}
 \left(\E\right)'(\gamma_0) = \delta \E(\gamma_0;\cdot)
\end{equation*}
for all $\gamma_0 \in \Wir, h \in \W$.

Finally, to see that the differential is \emph{continuous}, let $\sigma >0$ be given and let us choose
$\delta>0$ and $\varepsilon>0$ so small that
\begin{equation*}
 \lip_{U\cap X_\delta}\br{\Ee[\eps_1]-\Ee[\eps_2]} \refeq[\le]{CauchySequence2} C \delta \leq \tfrac13 \sigma
\end{equation*}
for all $\varepsilon_1, \varepsilon_2 < \eps$. Then we
have for $\gamma \in X_\delta \cap U$ and any $h\in\W$
\begin{align*}
 |\delta E(\gamma;h) - \delta E(\gamma_0;h)|
 &\leq |\delta \E(\gamma;h) - \delta \Ee(\gamma;h)| + 
  |\delta \Ee(\gamma;h) - \delta \Ee(\gamma_0;h)| 
 \\ 
  &\quad + |\delta \Ee(\gamma_0;h) - \delta \E(\gamma_0;h)|
 \\
  &\refeq[\le]{f-f}  |\delta \Ee(\gamma;h) - \delta \Ee(\gamma_0;h)| +  \tfrac23 \sigma \|h\|_{\W}.
\end{align*} 
Since $\Ee$ is $C^1$ we deduce that there is an open neighborhood $V\subset X_\delta$
of $\gamma_0$ such that 
\begin{equation*}
 |\delta \Ee(\gamma;h) - \delta \Ee(\gamma_0;h)| \leq \tfrac13\sigma \|h\|_{\W}
\end{equation*}
and hence
\begin{equation*}
  |\delta \E(\gamma;h) - \delta \E(\gamma_0;h)| \leq \sigma \|h\|_{\W}.
\end{equation*}
This proves that $\left(\E \right)'$ is continuous from $\Wir$ into $\left( \W \right)^\ast$ and
hence $\E$ is $C^1(\Wir)$.
\end{proof}

\begin{proof}[Theorem~\ref{thm:FrechetDifferential}]
 The only thing left to do is to show that for curves $\gamma \in \Wir$ parametrized by arc-length
 and $h \in \W$ the derivative can be given in the form stated in the theorem.
 Using that $\gamma$ is parametrized by arc-length, we get from Proposition~\ref{prop:formula} 
 and~\eqref{eq:dd} that
 \begin{align*}
  \delta\E(\gamma;h) &\xleftarrow{\varepsilon \searrow 0} \iint_{U_\varepsilon} 2
\left(\frac 1 {|\gamma(u+w) - \g(u)|^\alpha} - \frac 1 {|w|^\alpha} \right) 
  \left\langle \gamma'(u), h'(u) \right\rangle  \\
  & - \alpha  \left(\frac{\left\langle \D\g ,\D h \right\rangle }{|\D\g|^{\a+2}}
    -\frac{D(\gamma,h)(u,w)}{|w|^{\a+1}}\right)
  dw du 
 \end{align*}
 where now 
 \begin{equation*}
  D(\gamma,h)(u,w) = |w|\int_0^1 \left\langle \gamma'(u+ \theta w), h'(u+\theta w) \right\rangle d\theta
 \end{equation*}
 for all $(u,w) \in \mathbb R / \mathbb Z \times (-1/2, 1/2)$. Hence,
 \begin{align*}
  &\delta\E(\gamma;h) 
  \xleftarrow{\varepsilon \searrow 0}\iint_{U_\varepsilon}  \Bigg\{2
    \left(\frac 1 {|\gamma(u+w) - \g(u)|^\alpha} - \frac 1 {|w|^\alpha} \right) 
    \left\langle \gamma'(u), h'(u) \right\rangle \\
    &\quad{} - \alpha  \left(\frac{\left\langle \D\g ,\D h \right\rangle }{|\D\g|^{\a+2}}
    -\frac{\int_0^1 \left\langle \gamma'(u+\theta w ), h'(u+\theta w) \right\rangle d\theta}
    {|w|^{\a}}\right) \Bigg\} dw du \\
  & = \iint_{U_\varepsilon} \Bigg\{ 2
    \left(\frac 1 {|\gamma(u+w) - \g(u)|^\alpha} - \frac 1 {|w|^\alpha} \right) 
    \left\langle \gamma'(u), h'(u) \right\rangle \\
    &\qquad\qquad{} - \alpha  \left(\frac{\left\langle \D\g ,\D h \right\rangle }{|\D\g|^{\a+2}}
    -\frac{\left\langle \gamma'(u), h'(u) \right\rangle}{|w|^{\a}}\right) \Bigg\}dw du\\
& = \iint_{U_\varepsilon} 
  \bigg((\alpha -2) \frac {\left\langle \gamma'(u), h'(u) \right\rangle} {|w|^\alpha} 
  + 2 \frac  {\left\langle \gamma'(u), h'(u) \right\rangle} {|\gamma(u+w) - \g(u)|^\alpha}
    \\
  &\qquad\qquad{} - \alpha  \frac{\left\langle \D\g ,\D h \right\rangle }{|\D\g|^{\a+2}} \bigg) dw du.
\end{align*}
\end{proof}
\section{Regularity of stationary points}
\label{sect:bootstrap}

In this section we prove Theorem~\ref{thm:rsp} so we are
looking at embedded curves $\gamma \in \W[(1+\alpha)/2]$ parametrized by arc-length that satisfy
\begin{equation}\label{eq:ele}
 \delta\E(\gamma;h) + \lambda \int_{\mathbb R / \mathbb Z} \sp{\gamma',h'} =0 \quad \forall h \in \W[(1+\alpha)/2]
  (\mathbb R / \mathbb Z, \mathbb R^n)
\end{equation}
where $\lambda>0$ and
\begin{align*}
 \delta\E(\gamma;h) = \lim_{\varepsilon \searrow 0}  \iint_{U_\eps} \Bigg( &
    (\a-2) \frac{\sp{\dg(u),\dh(u)}}{\abs w^\a}
    + 2 \frac{\sp{\dg(u),\dh(u)}}{\abs{\D\g}^\a} \\
    &{}-\a \frac{\sp{\D\g,\D h}}{\abs{\D\g}^{\a+2}}
    \Bigg) \d w\d u.
\end{align*}
To prove that $\gamma \in C^\infty(\mathbb R / \mathbb Z, \mathbb R^n)$, we first decompose 
\begin{equation}\label{eq:decomposE}
 \delta\E(\g;h) = \a\Q(\g,h) + \RR(\gamma,h)
\end{equation}
where
\begin{equation*}
 \Q (\gamma, h ) := \lim_{\varepsilon \searrow 0} 
  \iint_{U_\varepsilon} 
    \left(\frac {\left\langle \gamma'(u), h'(u) \right\rangle }{|w|^\alpha} 
      -\frac {\left\langle \gamma(u+w)-\gamma(u), h(u+w) - h(u) \right\rangle}{|w|^{\alpha+2}} \right) \d w\d u
\end{equation*}
and $\RR(\gamma,h)$ is given by
\begin{align*}
&\RR(\gamma,h) :=
  2 \lim_{\eps\searrow0}\iint_{U_\eps} \sp{\dg(u),\dh(u)} \br{\frac1{\abs{\D\g}^{\a}} - \frac1{\abs w^{\a}}} \d w\d u \\
   &{}-\a\lim_{\eps\searrow0}\iint_{U_\eps} \sp{\D\g,\D h} \br{\frac1{\abs{\D\g}^{\a+2}} - \frac1{\abs w^{\a+2}}} d wd u.
\end{align*}
Later on, it will become evident that, in contrast to~$\Q$, the integral defining~$\RR$ is not a principle value, i.~e.\@ we may
write~$U_0$ instead of~$U_\eps$.

It was already observed by He in \cite{he:elghf} and the second author in \cite{reiter:rkepdc} that
$\Q(\gamma,h)$ is a lower order perturbation of the $L^2$ product of 
$(-\Delta)^{\frac {\alpha +1} 4} \gamma$ and $(-\Delta)^{\frac {\alpha +1} 4} h$.
To see this, let us first extend $\Q$ to complex valued functions
by exchanging the scalar product on $\mathbb R^n$ to the scalar product on $\mathbb C^n$.
We denote by $\hat f(k)= \int_{\mathbb R / \mathbb Z} f(u) e^{-2\pi i ku} \d u$ the $k$-th Fourier coefficient of $f$.

\begin{proposition}[cf. \protect{\cite[Lemma 2.3]{he:elghf}, \cite[Proposition 1.4]{reiter:rkepdc}}] \label{prop:formOfQ}
 There is a sequence of real numbers $q_k$, $k \in \mathbb Z$, converging to a positive constant
 for $|k| \rightarrow \infty$
 such that for all $\gamma, h \in\W[\br{1+\alpha}/2 ,2](\mathbb R / \mathbb Z, \mathbb R^n)$
 we have
 \begin{equation} \label{eq:formOfQ}
  \Q(\gamma, h ) = \sum_{k\in \mathbb Z} q_k |k|^{\alpha +1}  \hat \gamma(k) \overline{\hat h (k)}.
 \end{equation}
\end{proposition}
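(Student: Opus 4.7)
The plan is to diagonalize $\Q$ in the Fourier basis of $\R/\Z$ and read off the multipliers $q_k\abs k^{\alpha+1}$ explicitly. Both sides of~\eqref{eq:formOfQ} will turn out to be continuous sesquilinear forms on $\W\rzd$: the left-hand side by Theorem~\ref{thm:FrechetDifferential} together with the decomposition~\eqref{eq:decomposE} (the remainder $\RR(\gamma,h)$ being manifestly of lower order), and the right-hand side by Cauchy--Schwarz as soon as $\sup_k\abs{q_k}<\infty$. Since trigonometric polynomials are dense in $\W\rzd$, it suffices to verify~\eqref{eq:formOfQ} on that dense subspace and then extend by continuity.

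So I would take $\gamma(u)=\sum_k \hat\gamma(k)e^{2\pi iku}$ and $h(u)=\sum_k\hat h(k)e^{2\pi iku}$ with only finitely many non-vanishing Fourier coefficients and integrate in $u$ first. Since the $k$-th Fourier coefficient of $u\mapsto\gamma(u+w)-\gamma(u)$ equals $(e^{2\pi ikw}-1)\hat\gamma(k)$, Parseval's identity (with the $\C^n$ scalar product) yields
\begin{align*}
 \int_{\R/\Z}\sp{\dg(u),\dh(u)}_{\C^n}\d u &= \sum_k 4\pi^2 k^2 \sp{\hat\gamma(k),\hat h(k)}_{\C^n}, \\
 \int_{\R/\Z}\sp{\D\g,\D h}_{\C^n}\d u &= \sum_k 4\sin^2(\pi kw)\sp{\hat\gamma(k),\hat h(k)}_{\C^n}.
\end{align*}
Interchanging the (finite) sum with the $w$-integral and passing to $\eps\searrow0$ leads to
\begin{equation*}
 \Q(\gamma,h) = \sum_k\sp{\hat\gamma(k),\hat h(k)}_{\C^n}\int_{-1/2}^{1/2}\br{\frac{4\pi^2 k^2}{\abs w^\alpha}-\frac{4\sin^2(\pi kw)}{\abs w^{\alpha+2}}}\d w.
\end{equation*}
The Taylor expansion $\sin^2(\pi kw)=(\pi kw)^2+O(w^4)$ shows that the two $\abs w^{-\alpha}$ singularities cancel, leaving an $O(\abs w^{2-\alpha})$ integrand which is integrable near $0$ since $\alpha<3$; so the principal-value prescription becomes superfluous once one has integrated in~$u$.

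Declaring the displayed integral to be $q_k\abs k^{\alpha+1}$, the substitution $s=\abs k w$ gives, for $k\ne0$,
\begin{equation*}
 q_k = \int_{-\abs k/2}^{\abs k/2}\br{\frac{4\pi^2}{\abs s^\alpha}-\frac{4\sin^2(\pi s)}{\abs s^{\alpha+2}}}\d s.
\end{equation*}
This integrand is independent of $k$ and dominated by an $L^1(\R)$ function (behaving like $\abs s^{2-\alpha}$ near $0$ and like $\abs s^{-\alpha}$ at infinity, both integrable thanks to $\alpha\in(2,3)$), so dominated convergence delivers $q_k\to q_\infty:=\int_\R\br{\tfrac{4\pi^2}{\abs s^\alpha}-\tfrac{4\sin^2(\pi s)}{\abs s^{\alpha+2}}}\d s$. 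The elementary inequality $(\pi s)^2\ge\sin^2(\pi s)$, strict outside $s=0$, implies both $q_\infty>0$ and $\sup_k\abs{q_k}<\infty$; the latter feeds back into the initial density extension.

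The only slightly subtle point in the whole argument is the cancellation of the leading $\abs w^{-\alpha}$ singularities after integration in $u$: this is what converts the principal-value integral defining $\Q$ into an absolutely convergent Fourier sum and what keeps $(q_k)$ uniformly bounded. Everything else is a routine Fourier computation plus a density argument.
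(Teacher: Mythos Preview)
Your computation is correct and is exactly the standard Fourier diagonalisation that He and the second author carry out in the references the paper cites; the paper itself does not reproduce a proof. The identification of the multiplier $q_k\abs k^{\alpha+1}$, the rescaling $s=\abs k w$, the positivity $(\pi s)^2\ge\sin^2(\pi s)$, and the dominated-convergence argument for $q_k\to q_\infty>0$ are all as in those references.

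One small point deserves tightening. You justify continuity of $\Q$ on $\W\rzd$ via Theorem~\ref{thm:FrechetDifferential} and the decomposition~\eqref{eq:decomposE}, but those statements concern injective regular \emph{curves} (and~\eqref{eq:decomposE} needs $\g$ to be bi-Lipschitz for $\RR$ to even make sense), whereas here $\gamma$ is an arbitrary element of $\W\rzd$. The cleanest fix is to bypass the density step altogether: for general $\gamma,h\in\W$ and fixed $\eps>0$, Parseval applied to the $u$-integral already gives
\[
 \Q_\eps(\gamma,h)=\sum_{k\in\Z}\sp{\hat\gamma(k),\hat h(k)}_{\C^n}\int_{\eps\le\abs w\le 1/2}\frac{4\bigl((\pi kw)^2-\sin^2(\pi kw)\bigr)}{\abs w^{\alpha+2}}\d w,
\]
the interchange of sum and $w$-integral being justified by Tonelli since the integrand is nonnegative and the full integral is bounded by $q_\infty\abs k^{\alpha+1}$. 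Then the $\eps\to0$ limit passes through the sum by dominated convergence with majorant $q_\infty\abs k^{\alpha+1}\abs{\hat\gamma(k)}\abs{\hat h(k)}\in\ell^1$. This yields~\eqref{eq:formOfQ} directly and, as a by-product, the continuity of $\Q$ that you wanted.
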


Apart from this observation, the proof of Theorem~\ref{thm:rsp}  relies on the following estimate regarding the 
term $\RR(\gamma,h)$. Basically it lets us treat this term like a lower order perturbation.

\begin{proposition}\label{prop:RR}
 Let $\g\in\Wir[(\a+1)/2+\sigma]$ be parametrized by arc-length, $\sigma \geq 0$.
 \begin{enumerate}
  \item In the case $\sigma = 0$ we have $\RR(\gamma, \cdot)\in\br{H^{3/2+\eps ,2}}^*$ for any $\eps>0$.
  \item If $\sigma >0 $ we have $\RR(\gamma, \cdot)\in\br{H^{3/2- \hat \sigma,2}}^*$
  for all $\hat \sigma < \sigma$.	
 \end{enumerate}
\end{proposition}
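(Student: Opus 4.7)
The starting point is the pointwise identity
\[ \abs{w}^{2}-\abs{\gamma(u+w)-\gamma(u)}^{2}=\tfrac{w^{2}}{2}\iint_{[0,1]^{2}}\abs{\gamma'(u+s_{1}w)-\gamma'(u+s_{2}w)}^{2}\d s_{1}\d s_{2}=:\tfrac{w^{2}}{2}K(u,w), \]
which holds because $\gamma$ is parametrized by arc-length. Combined with the bi-Lipschitz bound $\abs{\gamma(u+w)-\gamma(u)}\ge c\abs{w}$ available from $\E(\gamma)<\infty$ (see~\cite{blatt:bre}), this yields
\[ \left|\tfrac{1}{\abs{\gamma(u+w)-\gamma(u)}^{\alpha}}-\tfrac{1}{\abs{w}^{\alpha}}\right|\le\tfrac{CK(u,w)}{\abs{w}^{\alpha}},\quad\left|\tfrac{1}{\abs{\gamma(u+w)-\gamma(u)}^{\alpha+2}}-\tfrac{1}{\abs{w}^{\alpha+2}}\right|\le\tfrac{CK(u,w)}{\abs{w}^{\alpha+2}}. \]
A Fubini/rescaling computation (substituting $v=u+s_{2}w$ and $\tilde w=(s_{1}-s_{2})w$) then gives $\iint K(u,w)\abs{w}^{-\alpha}\d w\d u\le C\seminorm{\gamma'}_{H^{(\alpha-1)/2}}^{2}<\infty$, which in particular shows that the two limits defining $\RR(\gamma,h)$ are actually ordinary convergent integrals.

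For case (i), the first summand of $\RR$ has the form $\int\phi(u)\sp{\gamma'(u),h'(u)}\d u$ with $\phi\in L^{1}$ by the estimate above, hence is bounded by $\norm{\phi}_{L^{1}}\norm{h'}_{L^{\infty}}$. The second summand is controlled by inserting the trivial bounds $\abs{\gamma(u+w)-\gamma(u)}\le\abs{w}$ and $\abs{h(u+w)-h(u)}\le\abs{w}\norm{h'}_{L^{\infty}}$, which leaves $C\norm{h'}_{L^{\infty}}\iint K(u,w)\abs{w}^{-\alpha}\d w\d u<\infty$. The embedding $H^{3/2+\eps}\hookrightarrow W^{1,\infty}$ from~\eqref{eq:embedding} then yields $\norm{h'}_{L^{\infty}}\le C\norm{h}_{H^{3/2+\eps}}$, which finishes this case.

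For case (ii) the additional regularity $\gamma\in\Wir[(\alpha+1)/2+\sigma]$ produces, via~\eqref{eq:embedding}, $\gamma'\in C^{(\alpha-2)/2+\sigma'}$ for every $\sigma'<\sigma$, whence $K(u,w)\le C\abs{w}^{\alpha-2+2\sigma'}$ pointwise. I apply Cauchy--Schwarz to the second summand in the form
\[ \iint\tfrac{\abs{h(u+w)-h(u)}K(u,w)}{\abs{w}^{\alpha+1}}\d w\d u\le\left(\iint\tfrac{\abs{h(u+w)-h(u)}^{2}}{\abs{w}^{2+2s}}\d w\d u\right)^{1/2}\!\!\left(\iint\tfrac{K(u,w)^{2}}{\abs{w}^{2\alpha-2s}}\d w\d u\right)^{1/2}, \]
choosing $s$ slightly above $1-\sigma'$: the pointwise bound on $K$ makes the second factor finite (dominated by $\iint K\abs{w}^{-\alpha}<\infty$), while the first factor is the Slobodeckij seminorm $\seminorm{h}_{H^{s+1/2}}$ with $s+\tfrac12<3/2-\hat\sigma$ for any $\hat\sigma<\sigma$. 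The first summand is handled analogously once one observes that $\phi$ itself gains Sobolev regularity from the improved decay of $K$ and the smoothness of $\gamma'$.

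The main technical obstacle is the regime $\sigma\le 1/2$, where the chosen $s+\tfrac12$ exceeds $1$ and the Slobodeckij seminorm no longer agrees with the $H^{s+1/2}$-norm. In that range I would rewrite $h(u+w)-h(u)=w\int_{0}^{1}h'(u+tw)\d t$ to push the Slobodeckij seminorm onto $h'$ (of order $1/2-\hat\sigma<1$), and absorb the small loss into the strict inequality $\hat\sigma<\sigma$; balancing the two cases and treating the $\phi$-piece of the first summand uniformly is the most delicate accounting step.
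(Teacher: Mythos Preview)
Your argument for case~(i) is correct and close in spirit to the paper's: both reduce to $\norm{h'}_{L^\infty}$ times the $L^1$-mass of $K(u,w)/\abs{w}^\alpha$.

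Case~(ii), however, has a genuine gap for large~$\sigma$. Your key input is the pointwise H\"older bound $K(u,w)\le C\abs{w}^{\alpha-2+2\sigma'}$, but this exponent can never exceed~$2$: once $\gamma'$ is Lipschitz, $K(u,w)\le C\abs{w}^{2}$ is the best possible pointwise estimate regardless of how much further regularity $\gamma$ enjoys (indeed $K(u,w)\sim c(u)\,w^{2}$ for smooth arc-length curves, as one sees from the Taylor expansion of $\abs{\gamma(u+w)-\gamma(u)}^{2}$ using $\sp{\gamma',\gamma''}=0$). Feeding the saturated bound into your Cauchy--Schwarz step, finiteness of the second factor $\iint K^{2}\abs{w}^{-2\alpha+2s}$ forces $s$ to stay above a fixed positive constant depending only on~$\alpha$, which in turn caps $\hat\sigma$ at a finite constant (your ``dominated by $\iint K\abs{w}^{-\alpha}$'' reasoning gives $\hat\sigma\le(4-\alpha)/2<1$). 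But the proposition must hold for \emph{every} $\sigma>0$, since the proof of Theorem~\ref{thm:rsp} bootstraps $\sigma\to\infty$; your method stalls after finitely many iterations. The first summand is also only sketched (``$\phi$ itself gains Sobolev regularity'') and would hit the same ceiling. Note that the obstacle you flag---the regime $\sigma\le\tfrac12$---is in fact the easier end; the real difficulty is the opposite one.

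The paper circumvents this by a different mechanism. Instead of squeezing decay out of pointwise H\"older control on~$K$, it transfers $\tilde\sigma$ fractional derivatives from $h'$ onto the $\gamma$-dependent kernel via the pairing $\int\sp{D^{\tilde\sigma}g,\,D^{-\tilde\sigma}h'}$, and then bounds $\int_{-1/2}^{1/2}\norm{g(\cdot,w)}_{H^{\tilde\sigma,p}}\d w$ using the fractional Leibniz rule (Lemma~\ref{lem:product}) together with the chain rule (Lemma~\ref{lem:chain}). This places the extra~$\sigma$ derivatives on~$\gamma'$ in an $L^{2}$ Sobolev sense rather than pointwise, so nothing saturates and the argument goes through uniformly for all~$\sigma>0$.
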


We will prove  Proposition~\ref{prop:RR} using Sobolev embeddings and the fractional Leibniz rule 
for Bessel potential spaces (cf. Lemma~\ref{lem:product}).

First we will show, that the two summands building $\RR$ can be brought into a common form 
and can thus be dealt with simultaneously. For that we use the fundamental theorem of calculus
to get
\begin{align*}
 &\sp{\D\g,\D h} \br{\frac1{\abs{\D\g}^{\a+2}} - \frac1{\abs w^{\a+2}}} \\
 &= w^2\int_0^1 \int_0^1 \sp{\dg (u+s_1 w),\dh(u+s_2 w)} \br{\frac1{\abs{\D\g}^{\a+2}} - \frac1{\abs w^{\a+2}}}
  \d s_1 \d s_2.
\end{align*}
Furthermore, for $\beta>0$,
\begin{multline*}
 \frac 1 {|\gamma(u+w) - \gamma(u)|^\beta} - \frac 1 {|w|^\beta} 
  = \frac {|w|^\beta}{|\gamma(u+w)-\gamma(u)|^\beta} \cdot
     \frac {1- \frac {|\gamma(u+w)-\gamma(u)|^\beta}{|w|^\beta}} {|w|^\beta} \\
  = G^{(\beta)}\left(\frac {\gamma(u+w)-\gamma(u)}{w}\right)
     \frac {2-2\frac {|\gamma(u+w)-\gamma(u)|^2}{w^2}} {|w|^\beta} \\
  = \int_{0}^1 \int_{0}^1 G^{(\beta)}\left(\frac {\gamma(u+w)-\gamma(u)}{w}\right)
  \frac {|\gamma'(u+\tau_1 w) - \gamma'(u+\tau_2 w)|^2}{|w|^\beta} d\tau_1 d \tau_2
\end{multline*}
where
\begin{align*}
 G^{(\beta)}(z) := \frac 1 {2|z|^\beta} \cdot \frac {1- |z|^\beta}{1 - |z|^2}
\end{align*}
is an analytic function away from the origin. Defining
\[ g^{(\a,\beta)}_{s_1, \tau_1, \tau_2}(u,w) :=  G^{(\beta)} 
\left(\frac {\gamma(u+w) - \gamma(u)}{w} \right) \frac {|\gamma'(u+\tau_1 w) -\gamma'(u+\tau_2 w)|^2}{|w|^\alpha} 
\gamma'(u+s_1 w) \]%h'(u+s_2 w) dw.
we thus get
% \begin{equation*}
%  
% \end{equation*}
% 
% \[ R^{(\a)}_{s_1,s_2,\tau_1,\tau_2,u,w}(\gamma,h)
%    :=  2\sp{g^{(\a,\a)}_{0, \tau_1, \tau_2,u,w}(\gamma),h'(u)}-\a\sp{g^{(\a,\a+2)}_{s_1, \tau_1, \tau_2,u,w}(\gamma),h'(u+s_2w)} \]
% for some $s_1, s_2, \tau_1, \tau_2 \in [0,1]$, $u\in\R/\Z$, $w\in[-\tfrac12,\tfrac12]$
% we obtain
\begin{equation}\label{eq:DecompositionOfR}
\begin{aligned}
 \RR(\gamma,h)
 &= \lim_{\eps \searrow 0} \Bigg\{2\iint\limits_{U_\eps}\iint\limits_{[0,1]^2} \left\langle g^{(\alpha,\alpha)}_{0,\tau_1,\tau_2}(u,w), 
  h'(u) \right\rangle \d\tau_1 \d\tau_2\d w\d u \\
 & \quad - \alpha \iint\limits_{U_\eps}\iiiint\limits_{[0,1]^4} \left\langle g^{(\alpha,\alpha+2)}_{s_1,\tau_1,\tau_2}(u,w), 
  h'(u+s_2 w) \right\rangle \d s_1 \d s_2 \d\tau_1 \d\tau_2\d w\d u \Bigg\}.
 \end{aligned}
\end{equation}
% Here we used that $(u,w)\mapsto\sp{g^{(\a,\beta)}_{s_1, \tau_1, \tau_2}(u,w),h'(u+s_2w)}$ is $L^1(U_0)$
% which will be proven in detail in Lemma~\ref{lem:basicEstimate}~(i) below.

Thus using H\"older's inequality we get the estimate 
\begin{equation} \label{eq:estL1}
\begin{split}
 \RR(\gamma,h) \leq C \|h'\|_{L^\infty} \sup_{\substack{\beta\in\set{\a,\a+2},\\s_1,\tau_1,\tau_2\in[0,1]}}
 \int_{-1/2}^{1/2}\norm{g^{(\a,\beta)}_{s_1, \tau_1, \tau_2}(\cdot,w)}_{L^1}\d w
 \\ \leq  C \|h\|_{\W[3/2 + \varepsilon, 2]} \sup_{\substack{\beta\in\set{\a,\a+2},\\s_1,\tau_1,\tau_2\in[0,1]}}
 \int_{-1/2}^{1/2}\norm{g^{(\a,\beta)}_{s_1, \tau_1, \tau_2}(\cdot,w)}_{L^1}\d w
\end{split}
\end{equation}
for any $\eps>0$.

For $\s \in\R$ let
\begin{equation}\label{eq:Ds}
 D^\s := (-\Delta)^{\s/2}.
\end{equation}
By partial integration we infer for $\ts \in \mathbb R$
\[ \int\limits_{\R/\Z}
 \sp{g^{(\a,\beta)}_{s_1, \tau_1, \tau_2}(u,w),h'(u+s_2w)}
 \d u
 =\int\limits_{\R/\Z}
 \sp{D^\ts g^{(\a,\beta)}_{s_1, \tau_1, \tau_2}(u,w),D^{-\ts}h'(u+s_2w)}
 \d u \]
and we can estimate the absolute value by
\[ \norm{D^{-\ts}h'}_{\Lp[\infty]}\int\limits_{\R/\Z}
 \abs{D^\ts g^{(\a,\beta)}_{s_1, \tau_1, \tau_2}(u,w)}
 \d u
 \le C\norm{h}_{\W[3/2-\ts+\eps,2]}
 \norm{g^{(\a,\beta)}_{s_1, \tau_1, \tau_2}(\cdot,w)}_{\W[\ts,1]} \]
 for any $\eps>0$.
Combining this with Equation~\eqref{eq:DecompositionOfR} we get
\begin{equation*} % \label{eq:estHsigma}
 \abs{\RR(\g,h)}
 \le C\norm{h}_{\W[3/2-\ts+\eps,2]}\sup_{\substack{\beta\in\set{\a,\a+2},\\s_1,\tau_1,\tau_2\in[0,1]}}
 \int_{-1/2}^{1/2}\norm{g^{(\a,\beta)}_{s_1, \tau_1, \tau_2}(\cdot,w)}_{\W[\ts,1]}\d w
\end{equation*}
for all $\ts$ and $\varepsilon >0$.
To prove Proposition~\ref{prop:RR}, given $\sigma > \hat \sigma $
we set $\ts = (\s + \hat \s)/2 > \hat \s$  and $\varepsilon = \ts-\hat \s$ in the calculations above,
to get
\begin{equation} \label{eq:estHsigma}
 \abs{\RR(\g,h)}
 \le C\norm{h}_{\W[3/2-\hat \s,2]}\sup_{\substack{\beta\in\set{\a,\a+2},\\s_1,\tau_1,\tau_2\in[0,1]}}
 \int_{-1/2}^{1/2}\norm{g^{(\a,\beta)}_{s_1, \tau_1, \tau_2}(\cdot,w)}_{\W[\ts,1]}\d w.
\end{equation}
Proposition~\ref{prop:RR} now immediately follows from Estimate~\eqref{eq:estL1},
Estimate~\eqref{eq:estHsigma} and the succeding lemma.

\begin{lemma}\label{lem:basicEstimate}
 Let $\gamma \in\W[(\alpha+1)/2+\s,2] (\mathbb R / \mathbb Z, \mathbb R^n)$ with $\s\ge0$ and $\beta>0$.
\begin{enumerate}
 \item If $\s=0$ then $g^{(\a,\beta)}_{s_1, \tau_1, \tau_2} \in L^1(\R/\Z\times(-\tfrac12,\tfrac12),\R^n)$.
  Furthermore, there is a constant $C<\infty$
  independent of $\tau_1$, $\tau_2$, and $s_1$ such that
  \[ \norm{g^{(\a,\beta)}_{s_1, \tau_1, \tau_2}}_{\Lp[1]}\le C. \]
 \item If $\sigma> 0$ then $g^{(\a,\beta)}_{s_1, \tau_1, \tau_2,\cdot,\cdot} \in L^1((-\tfrac12,\tfrac12),H^{ \tilde \sigma,1}(\R/\Z,\R^n))$
 for all $\tilde \sigma < \sigma$
  and there is a constant $C<\infty$
  independent of $\tau_1$, $\tau_2$, and $s_1$ such that
  \[ \int_{-1/2}^{1/2}\norm{g^{(\a,\beta)}_{s_1, \tau_1, \tau_2}(\cdot,w)}_{\W[\tilde\sigma,1]}\d w \le C. \]
\end{enumerate}
\end{lemma}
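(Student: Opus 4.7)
The strategy is to factor $g^{(\a,\beta)}_{s_1,\tau_1,\tau_2}$ into simpler pieces and reduce everything to Slobodeckij-type seminorms of $\gamma'$. Since $\gamma\in\Wir[(\a+1)/2+\sigma]$ embeds into $C^1$ by~\eqref{eq:embedding} and is an injective, regular, arc-length curve, we have $|\gamma'|=1$ and a uniform bilipschitz bound $c\le|\gamma(u+w)-\gamma(u)|/|w|\le 1$. Consequently, the argument of $G^{(\beta)}$ stays in a compact subset of $\R^n\setminus\{0\}$ on which $G^{(\beta)}$ is $C^\infty$, so $|G^{(\beta)}(\tfrac{\gamma(u+w)-\gamma(u)}{w})|\le C$, and $|\gamma'(u+s_1w)|=1$. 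This yields the pointwise bound
\[
\abs{g^{(\a,\beta)}_{s_1,\tau_1,\tau_2}(u,w)}\le C\frac{\abs{\gamma'(u+\tau_1 w)-\gamma'(u+\tau_2 w)}^2}{\abs{w}^\a}.
\]

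For part (i), I would apply Fubini together with the change of variables $v=u+\tau_1 w$, $\tilde w=(\tau_1-\tau_2)w$ (Jacobian $|\tau_1-\tau_2|^{-1}$) to the right-hand side above. The double integral becomes $|\tau_1-\tau_2|^{\a-1}$ times the squared Slobodeckij seminorm $\seminorm{\gamma'}_{\W[(\a-1)/2,2]}^{2}$, which is dominated by $\norm{\gamma}_{\W[(\a+1)/2,2]}^{2}$. Since $|\tau_1-\tau_2|^{\a-1}\le 1$, this yields a bound independent of $s_1,\tau_1,\tau_2$, proving (i).

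For part (ii), I apply the fractional Leibniz rule (Lemma~\ref{lem:product}) for each fixed $w$ to the three-fold product defining $g$: the smooth composition $G^{(\beta)}\circ\tfrac{\gamma(\cdot+w)-\gamma(\cdot)}{w}$, the squared difference $\abs{\gamma'(\cdot+\tau_1 w)-\gamma'(\cdot+\tau_2 w)}^2/\abs{w}^\a$, and the translate $\gamma'(\cdot+s_1 w)$. The composition factor is bounded in $\W[\tilde\sigma,p]$ uniformly in $w$ by the Sobolev chain rule, as the quotient $\tfrac{\gamma(\cdot+w)-\gamma(\cdot)}{w}=\int_0^1\gamma'(\cdot+\theta w)\d\theta$ is a mean of translates of $\gamma'$ and therefore uniformly bounded in $\W[(\a-1)/2+\sigma,2]$ while staying in a compact subset of $\R^n\setminus\{0\}$. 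The translate contributes $\norm{\gamma'}_{\W[\tilde\sigma,2]}$. For the squared-difference factor, the bilinear estimate $\norm{\abs{f}^2}_{\W[\tilde\sigma,1]}\le C\norm{f}_{\W[\tilde\sigma,2]}\norm{f}_{\Lp[2]}$ applied to $f=\gamma'(\cdot+\tau_1 w)-\gamma'(\cdot+\tau_2 w)$ leads to
\[
\norm{g^{(\a,\beta)}_{s_1,\tau_1,\tau_2}(\cdot,w)}_{\W[\tilde\sigma,1]}
\le \frac{C}{\abs{w}^\a}\,\norm{f}_{\W[\tilde\sigma,2]}\,\norm{f}_{\Lp[2]}.
\]

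The main obstacle is the subsequent $w$-integration. I would use real interpolation $\W[\tilde\sigma,2]=[\Lp[2],\W[(\a-1)/2+\sigma,2]]_{\theta}$ with $\theta=\tilde\sigma/((\a-1)/2+\sigma)<1$ together with translation invariance to obtain $\norm{f}_{\W[\tilde\sigma,2]}\le C\norm{f}_{\Lp[2]}^{1-\theta}\norm{\gamma'}_{\W[(\a-1)/2+\sigma,2]}^{\theta}$, trading regularity in $u$ for decay in $\abs{w}$. Combined with the Slobodeckij characterization $\int \abs{w}^{-\a-2\sigma}\norm{f}_{\Lp[2]}^{2}\d w\le C\norm{\gamma'}_{\W[(\a-1)/2+\sigma,2]}^{2}$, a H\"older split of the weight $\abs{w}^{-\a}$ with exponents $2/(2-\theta)$ and $2/\theta$ reduces the integral to a convergent power of $\abs{w}$ precisely because $\tilde\sigma<\sigma$ leaves the margin $2\sigma-\tilde\sigma>0$ needed for the Hölder exponent to satisfy the integrability threshold. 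This strict inequality is the essential obstruction, explaining why the endpoint $\tilde\sigma=\sigma$ cannot be reached by this method and making part~(ii) markedly more delicate than part~(i).
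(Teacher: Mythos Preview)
Your argument for part~(i) is essentially the paper's: pointwise bound followed by a change of variables reducing to the Slobodeckij seminorm of~$\gamma'$.

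For part~(ii) there is a genuine gap. You invoke the fractional Leibniz rule (Lemma~\ref{lem:product}) with target exponent $r=1$, in particular the bilinear estimate $\norm{|f|^2}_{\W[\tilde\sigma,1]}\le C\norm{f}_{\W[\tilde\sigma,2]}\norm{f}_{\Lp[2]}$. But Lemma~\ref{lem:product} requires $r\in(1,\infty)$; the endpoint $r=1$ is explicitly excluded, and the paper flags this right at the start of its proof (``Since there is no suitable product rule for $p=1$''). The same obstruction hits the outer product with the factors $G^{(\beta)}\!\big(\tfrac{\gamma(\cdot+w)-\gamma(\cdot)}{w}\big)$ and $\gamma'(\cdot+s_1w)$: to land in $\W[\tilde\sigma,1]$ via Lemma~\ref{lem:product} you would need these in $\W[\tilde\sigma,\infty]$, again outside the stated range.

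The paper's remedy is different from your interpolation scheme and in fact makes it unnecessary. One estimates $\norm{g(\cdot,w)}_{\W[\tilde\sigma,p]}$ for some $p>1$ close to~$1$, distributing exponents as $\tfrac1p=\tfrac1{2\tilde p}+\tfrac1{2\tilde p}+\tfrac1q+\tfrac1q$. The $G^{(\beta)}$-factor and $\gamma'$ sit in $\W[\tilde\sigma,q]$ for $q$ large by the chain rule and Sobolev embedding, and the Leibniz rule (now legitimately, with all exponents in $(1,\infty)$) gives
\[
\norm{g(\cdot,w)}_{\W[\tilde\sigma,p]}\le C\,\frac{\norm{\gamma'(\cdot+\tau_1w)-\gamma'(\cdot+\tau_2w)}_{\W[\tilde\sigma,2\tilde p]}^{2}}{|w|^{\a}}.
\]
The regularity gap $\sigma-\tilde\sigma>0$ is then spent on the Sobolev embedding $\W[\sigma,2]\hookrightarrow\W[\tilde\sigma,2\tilde p]$ (valid for $\tilde p$ close enough to~$1$), which replaces the right-hand norm by $\norm{\cdot}_{\W[\sigma,2]}$. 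After that the $w$-integration is a direct Slobodeckij seminorm of $\gamma'$ and of $D^{\sigma+1}\gamma$ via the same substitution you used in part~(i); no interpolation or H\"older splitting of the weight $|w|^{-\a}$ is needed. Your interpolation idea is a plausible alternative mechanism for exploiting $\tilde\sigma<\sigma$, but it does not by itself repair the $r=1$ issue, and once you move to $p>1$ the simpler Sobolev-embedding route is available.
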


\begin{proof}
 Let us first deal with the case $\sigma =0$. We get
 \begin{align*}
  &\|g^{(\a,\beta)}_{s_1, \tau_1, \tau_2}\|_{L^1 (\mathbb R / \mathbb Z\times(-\frac12,\frac12), \mathbb R^n)} \\
  &\leq 
  \int_{\mathbb R, \mathbb Z}  \int_{-1/2}^{1/2} G^{(\beta)}\left(\frac {\gamma(u+w) - \gamma(u)}{w} \right) 
  \frac{|\gamma'(u+\tau_1 w) - \gamma'(u+\tau_2 w)|^2}{|w|^\alpha} \abs{\gamma'(u+s_1 w)}  dw du 
  \\
  & \leq C \|\gamma'\|_{\Lp[\infty]} \int_{\mathbb R / \mathbb Z} \int_{-1/2}^{1/2}
     \frac{|\gamma'(u+\tau_1 w) - \gamma'(u+\tau_2 w)|^2}{|w|^\alpha}
    dw du
  \\
  & \leq C \|\gamma'\|_{\Lp[\infty]} \|\gamma\|_{\W[(\a+1)/2,2]}^2
 \end{align*}
  which proves the statement for $\sigma =0$.

 Since there is no suitable 
 product rule for $p=1$, we will
 estimate $\norm{g^{(\a,\beta)}_{s_1, \tau_1, \tau_2}(\cdot,w)}_{\W[\tilde \sigma,p]}$ for $p>1$ sufficiently small.
 For this we will use a small $\tilde p > p$ and let $q$
 be such that 
 \begin{equation*}
  \frac 1 p = \frac 1 {2\tilde p} +  \frac 1 {2\tilde p} + \frac 1 q + \frac 1 q,
 \end{equation*}
 i.~e.\@ we set
 \begin{equation*}
  q = 2 \frac {\tilde p p} {\tilde p -p}.
 \end{equation*}
Using that
\begin{equation*}
 \frac {\gamma(u+w)-\gamma(u)} {w} = \int_{0}^1 \gamma'(u+\tau w) d\tau,
\end{equation*}
that $\gamma$ is bi-Lipschitz, and that $G^{(\beta)}$ is analytic away from the origin, 
we get that
\begin{equation*}
 \left\|G^{(\beta)}\left(\frac {\gamma(\cdot +w)-\gamma(\cdot)}{w}\right)\right\|_{\W[\tilde\sigma,q]}
 \leq C \|\gamma\|_{\W[ \tilde \sigma +1,q]}
  \leq C
\end{equation*}
by the Sobolev embedding.

Using the fractional Leibniz rule (Lemma~\ref{lem:product}) three times, we derive for $\tilde\s\in(0,\s)$
\begin{align*}
  \|g^{(\a,\beta)}_{s_1, \tau_1, \tau_2}&(\cdot,w)\|_{\W[\tilde \sigma,p]} \\
 &\leq C 
 \left\|G^{(\beta)}\left(\frac {\gamma(\cdot +w)-\gamma(\cdot)}{w}\right) \right\|_{\W[ \tilde \sigma,q]}
 \|\gamma'\|_{\W[\tilde \sigma,q]}
\frac{\|\gamma'(\cdot + \tau_1 w) - \gamma'(\cdot + \tau_2 w )\|^2_{\W[\tilde\sigma,2\tilde p]}}{\abs w^\a}
 \\
 &\leq C \frac{\|\gamma'(\cdot + \tau_1 w) - \gamma'(\cdot + \tau_2 w )\|^2_{\W[\tilde\sigma,2\tilde p]}}{\abs w^\a}.
\end{align*}
We now choose $\tilde p>1$ so small that $H^{\sigma ,2}$ embeds into $H^{\tilde \sigma, 2 \tilde p}$
and hence
\begin{equation*}
 \|g^{(\a,\beta)}_{s_1, \tau_1, \tau_2}(\cdot, w)\|_{\W[\tilde \sigma,p]} 
  \leq  C \frac{\|\gamma'(\cdot + \tau_1 w) - \gamma'(\cdot + \tau_2 w )\|^2_{\W[\sigma,2]}}{\abs w^\a}.
\end{equation*}
Thus, recalling~\eqref{eq:Ds},
\begin{align*}
  \int_{-1/2}^{1/2}\|g^{(\a,\beta)}_{s_1, \tau_1, \tau_2}(\cdot,w)\|_{\W[\tilde \sigma,p]} \d w&\leq C
  \int_{-1/2}^{1/2} \frac{\|\gamma'(\cdot + \tau_1 w) - \gamma'(\cdot + \tau_2 w )\|^2_{\Lp[2]} }{\abs w^\alpha} dw
  \\
  & \quad + C
  \int_{-1/2}^{1/2} \frac {\|D^{\sigma+1}\gamma(\cdot + \tau_1 w) - D^{\sigma+1}\gamma(\cdot + \tau_2 w )\|^2_{\Lp[2]} 
   }{\abs w^\alpha} dw
  \\
&\leq C
\int_{-1/2}^{1/2} \frac{\|\gamma'(\cdot) - \gamma'(\cdot + (\tau_2 - \tau_1) w )\|^2_{\Lp[2]} }{\abs w^\alpha} dw
 \\
& \quad + C
  \int_{-1/2}^{1/2} \frac {\|D^{\sigma+1}\gamma(\cdot) 
- D^{\sigma+1} \gamma(\cdot + (\tau_2 - \tau_1) w )\|^2_{\Lp[2]} }{\abs w^\alpha} dw
\\
&\leq C |\tau_2 - \tau_1|
\int_{-1}^{1} \frac{\|\gamma'(\cdot) - \gamma'(\cdot +  w )\|^2_{\Lp[2]} }{\abs w^\alpha} dw
 \\
& \quad + C |\tau_2-\tau_1|
  \int_{-1}^{1} \frac {\|D^{\sigma+1}\gamma(\cdot) - D^{\sigma+1} \gamma(\cdot +  w )\|^2_{\Lp[2]} }{\abs w^\alpha} dw
\\
& \leq C \|\gamma'\|_{\W[(\a-1)/2,2]}^2 
 + C \|D^{\sigma+1}\gamma\|_{\W[(\a-1)/2,2]}^2 \leq C.
\end{align*}
This proves Lemma~\ref{lem:basicEstimate}.
\end{proof}

\begin{proof}[Theorem~\ref{thm:rsp}]
 Recall that any finite-energy curve belongs to $\W[(\a+1)/2,2]$ by~\cite{blatt:bre}.
 
 Let us assume that $\gamma \in H^{\frac {\alpha + 1} 2 + \sigma,2} (\mathbb R / \mathbb Z, \mathbb R^n)$
 for $\sigma \geq 0$ is a stationary point of the energy $\E + \lambda\length$.
 As the first variation of the length functional gives rise to a linear lower order term,
 Proposition~\ref{prop:formOfQ} also applies to
 \[ \tQ(\g,h) := \a\Q(\g,h)+\lambda\int_{\R/\Z}\sp{\dg,\dh}. \]
 In the case that $\sigma=0$ we get from the Euler-Lagrange Equation~\eqref{eq:ele} using the decomposition~\eqref{eq:decomposE} and Proposition~\ref{prop:RR} 
 \begin{equation*}
  \tQ( \gamma, \cdot)
  \in \left( H^{{3}/2+\eps,2} \right)^{\ast}
 \end{equation*}
 for any $\eps>0$.
 Using Proposition~\ref{prop:formOfQ} we hence get
 \begin{equation*}
   (q_k |k|^{\alpha+1-3/2-\eps} \hat \gamma (k))_{k\in \mathbb Z} \in \ell^2.
 \end{equation*}
 Together with the fact that $q_k$ converge to a positive constant as $k \rightarrow \infty$
 we get $ \left(|k|^{\alpha+1-3/2-\eps} \hat \gamma (k)\right)_{k \in \mathbb Z} \in \ell^2$
 and hence
  \begin{equation*}
    \gamma \in H^{\tfrac {\alpha +1 } 2 + \tfrac {\alpha -2 } 2 -\eps,2}
    (\mathbb R / \mathbb Z, \mathbb R^n).
  \end{equation*}
 For $\sigma >0$ we get using Proposition~\ref{prop:RR} 
 \begin{equation*}
  \tQ( \gamma, \cdot)
  \in \left( H^{\frac {3}2 - \hat \sigma } \right)^*\qquad\text{for all }\hat\sigma<\sigma
 \end{equation*} 
and arguing as above
\begin{equation*}
 \gamma \in H^{\br{\tfrac {\alpha +1 } 2 + \hat \s}+ \tfrac {\alpha -2 } 2,2}
  (\mathbb R / \mathbb Z, \mathbb R^n)
\end{equation*}
for all $\hat \s < \sigma$.

If we now initially assume that $\gamma \in H^{\frac {\alpha +1} 2 ,2}$ we deduce by induction
and since $\tfrac {\alpha-2} 2 >0$ that
 \begin{equation*}
  \gamma \in H^{s,2}(\mathbb R / \mathbb Z, \mathbb R^n)
 \end{equation*}
for all $s \in \mathbb R$ and thus $\gamma \in C^{\infty}(\mathbb R / \mathbb Z, \mathbb R^n)$.
This proves Theorem~\ref{thm:rsp}
\end{proof}

\begin{appendix}
 \section{Results on fractional Sobolev spaces}\label{sect:fractional}

Let us gather two results we used in the article: The product and
chain rule which go back to Coifman and Meyer~\cite{CM} and Christ and Weinstein~\cite{CW}.

\begin{lemma}[Leibniz Rule, cf.~\cite{CM}]\label{lem:product}
  Let $p_i,q_i,r \in (1,\infty)$, be such that $\frac 1{p_i} + \frac 1 {q_i} = \frac 1r$, for $i=1,2$ and 
  $s > 0$. 
  Then 
 \begin{equation*}
  \|f\cdot g\|_{\W[s,r]} \leq C \left(\|f\|_{\Lp[p_1]} \|g\|_{\W[s,q_1]} + \|f\|_{\W[s,p_2]} \|g\|_{\Lp[q_2]} \right).
 \end{equation*}
\end{lemma}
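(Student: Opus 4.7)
The plan is to use Bony's paraproduct decomposition combined with the Littlewood--Paley characterisation of the Bessel potential spaces on $\R/\Z$. Fix a smooth dyadic partition of unity in frequency, write $\Delta_k$ for the Fourier projection onto frequencies of size $\sim 2^k$ and set $S_k:=\sum_{j<k}\Delta_j$. For any $r\in(1,\infty)$ the $H^{s,r}$-norm is equivalent to the Littlewood--Paley square function norm, so it is enough to control $\bigl(\sum_k 2^{2sk}|\Delta_k(fg)|^2\bigr)^{1/2}$ in $L^r$.

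Bony's decomposition gives
\[ fg \;=\; T_f g \;+\; T_g f \;+\; R(f,g), \]
with the low--high paraproduct $T_f g:=\sum_k (S_{k-3}f)\,\Delta_k g$, its symmetric counterpart $T_g f$, and the high--high resonance $R(f,g):=\sum_{|j-k|\le 2}\Delta_j f\cdot \Delta_k g$. In $T_f g$ each summand $(S_{k-3}f)\,\Delta_k g$ is frequency-localised at $\sim 2^k$, so the square function of $T_f g$ is essentially $\bigl(\sum_k 2^{2sk}|S_{k-3}f|^2|\Delta_k g|^2\bigr)^{1/2}$, dominated pointwise by $(Mf)\cdot\bigl(\sum_k 2^{2sk}|\Delta_k g|^2\bigr)^{1/2}$, where $M$ is the Hardy--Littlewood maximal function. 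H\"older with exponents $(p_1,q_1)$ and the boundedness of $M$ on $L^{p_1}$ (which uses $p_1>1$) yield the first summand $\|f\|_{L^{p_1}}\|g\|_{H^{s,q_1}}$ in the claim; the mirror argument produces the second summand $\|f\|_{H^{s,p_2}}\|g\|_{L^{q_2}}$.

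The hard part is the resonance term $R(f,g)$, which is not automatically frequency-localised: a block $\Delta_j f\cdot \Delta_k g$ with $j\sim k$ contributes to every output frequency $\le 2^{k+C}$. This is precisely where the hypothesis $s>0$ enters. Writing $\Delta_N R(f,g) \;=\; \sum_{k\ge N-C}\Delta_N\bigl(\Delta_k f\,\widetilde\Delta_k g\bigr)$ with a fattened operator $\widetilde\Delta_k$, and using $|\Delta_N(\cdot)|\le C\,M(\cdot)$, one arrives at the pointwise estimate
\[ 2^{sN}|\Delta_N R(f,g)| \;\le\; C\sum_{k\ge N-C} 2^{-s(k-N)}\,2^{sk}\,M\bigl(\Delta_k f\,\widetilde\Delta_k g\bigr). \]
Since $s>0$, the kernel $2^{-s(k-N)_+}$ lies in $\ell^1_N$, so Young's inequality in $\ell^2_N$ together with the Fefferman--Stein vector-valued maximal inequality (available because $r>1$) bounds $\|R(f,g)\|_{H^{s,r}}$ by $\|f\|_{H^{s,p_2}}\|g\|_{L^{q_2}}$, or, by symmetry, by $\|f\|_{L^{p_1}}\|g\|_{H^{s,q_1}}$. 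Summing the three paraproduct contributions closes the estimate. This resonance bound is the heart of the argument: without $s>0$ the geometric sum diverges and the Leibniz-type inequality genuinely fails; the two off-diagonal paraproducts are routine consequences of the Fefferman--Stein and Hardy--Littlewood inequalities, and the passage from $\R^n$ to the torus is standard periodic Littlewood--Paley.
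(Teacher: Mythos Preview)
The paper does not actually prove this lemma: it is stated in the appendix as a known result, with attribution to Coifman and Meyer~\cite{CM} and a pointer to Runst and Sickel~\cite{RS}. So there is nothing to compare against on the paper's side.

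Your sketch is the standard modern proof of the Kato--Ponce/Coifman--Meyer fractional Leibniz rule and is correct. The treatment of the two paraproducts $T_fg$ and $T_gf$ via the pointwise bound $|S_{k-3}f|\le CMf$, H\"older, and the Hardy--Littlewood maximal theorem is fine; the resonance bound via the $\ell^1$ kernel $2^{-s(k-N)_+}$ (this is exactly where $s>0$ is used), Young in $\ell^2$, Fefferman--Stein, and then the pointwise estimate $|\widetilde\Delta_k g|\le CMg$ to peel off the undifferentiated factor is also correct. One small remark: the passage to the torus deserves a sentence rather than ``standard'' --- the periodic Littlewood--Paley square-function characterisation of $H^{s,r}(\R/\Z)$ for $r\in(1,\infty)$ and the periodic Fefferman--Stein inequality are what you need, and both are available; alternatively one can transfer from $\R$ by localisation. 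Either way your argument goes through.
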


We also refer to Runst and Sickel~{\cite[Lem.~5.3.7/1~(i)]{RS}}. ---
For the following statement, one mainly has to treat $\norm{(D^k\psi)\circ f}_{\W[\s,p]}$ for $k\in\N\cup\set0$ and $\s\in(0,1)$
which is e.~g.\@ covered by~\cite[Thm.~5.3.6/1~(i)]{RS}.

\begin{lemma}[Chain rule, cf.~\cite{CM}]\label{lem:chain}
 Let $f\in H^{s,p}(\mathbb R / \mathbb Z, \mathbb R^n)$, $s > 0$, $p \in (1,\infty)$.
 If $\psi\in C^\infty(\R)$ such that $\psi$ and all its derivatives vanish at~$0$ then $\psi\circ f\in\W[s,p]$ and
 \[ \|\psi \circ f\|_{\W[s,p]} \le C\|\psi\|_{C^{\scriptstyle k}}\|f\|_{\W[s,p]} \]
 where $k$ is the smallest integer greater or equal to $s$.
\end{lemma}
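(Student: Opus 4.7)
The plan is to reduce the claim to two ingredients: an integer-order chain rule via Fa\`a di Bruno's formula for $\|\psi\circ f\|_{W^{m,p}}$, and the Slobodeckij-type chain rule of Runst--Sickel for the fractional seminorm. Write $s=m+\sigma$ with $m\in\mathbb N\cup\{0\}$ and $\sigma\in[0,1)$, so $k=m$ if $\sigma=0$ and $k=m+1$ otherwise. On the torus, $\|\psi\circ f\|_{H^{s,p}}$ is equivalent to the $W^{m,p}$ norm plus, when $\sigma>0$, the Slobodeckij seminorm $[(\psi\circ f)^{(m)}]_{H^{\sigma,p}}$, and I will bound these two contributions separately. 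Note that for the regime where the embedding $H^{s,p}\hookrightarrow L^\infty$ fails one either reduces $s$ first (using the vanishing of $\psi,\psi',\dots$ at $0$ together with a Taylor estimate to absorb a power of $\|f\|_{L^p}$), or one treats it directly by the same expansion below; the essential structure is identical.

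For the integer-order contribution, invoke Fa\`a di Bruno to write
\[
(\psi\circ f)^{(j)} \;=\; \sum_{\lambda\vdash j} c_\lambda\,\bigl(\psi^{(\ell(\lambda))}\!\circ f\bigr)\prod_{i=1}^{\ell(\lambda)} f^{(\lambda_i)}, \qquad 0\le j\le m,
\]
where $\lambda=(\lambda_1,\dots,\lambda_{\ell(\lambda)})$ runs over partitions of $j$ with $\lambda_i\ge 1$. The hypothesis that every derivative $\psi^{(\ell)}$ vanishes at~$0$ yields the pointwise bound $|\psi^{(\ell)}\circ f|\le\|\psi^{(\ell+1)}\|_{L^\infty}|f|$, hence each composition factor is controlled in $L^\infty$-to-$L^p$ by $\|\psi\|_{C^k}\|f\|_{L^p}$. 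Distributing the $j$ derivatives across $f^{(\lambda_1)},\dots,f^{(\lambda_{\ell(\lambda)})}$ via H\"older's inequality together with the Gagliardo--Nirenberg interpolation in $H^{m,p}$ closes the estimate $\|(\psi\circ f)^{(j)}\|_{L^p}\le C\|\psi\|_{C^k}\|f\|_{H^{s,p}}$.

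For the fractional seminorm (when $\sigma>0$), expand $(\psi\circ f)^{(m)}$ by Fa\`a di Bruno as above and apply the fractional Leibniz rule (Lemma~\ref{lem:product}) iteratively to each summand, distributing the regularity $\sigma$ among the $\ell+1$ factors. The factor $\psi^{(\ell)}\circ f$ is a composition to which~\cite[Thm.~5.3.6/1~(i)]{RS} applies, giving $\|\psi^{(\ell)}\circ f\|_{H^{\sigma,q}}\le C\|\psi^{(\ell)}\|_{C^{\lceil\sigma\rceil}}\|f\|_{H^{\sigma,q}}$; the assumption that all derivatives of $\psi$ vanish at the origin is precisely what is needed to place each $\psi^{(\ell)}$ into the hypotheses of that theorem. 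The remaining factors $f^{(\lambda_i)}$ are estimated in appropriate $H^{\sigma,q_i}$ spaces, again by Gagliardo--Nirenberg interpolation combined with the embedding $H^{s,p}\hookrightarrow L^\infty$ (when available).

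The main obstacle is the combinatorial bookkeeping of H\"older conjugates and smoothness exponents: for each partition one must choose $q,q_1,\dots,q_{\ell}\in(1,\infty)$ with $\tfrac1q+\sum_i\tfrac1{q_i}=\tfrac1p$ and nonnegative regularity indices summing correctly, so that every factor is bounded by a universal power of $\|f\|_{H^{s,p}}$. Since $\sigma+\sum_i\lambda_i = \sigma+m = s$ exactly matches the total regularity available, and the partition has only finitely many terms, the Gagliardo--Nirenberg exponents allocated to each factor sum to~$1$ in the end. Collecting all partitions and all $j\le m$ produces the desired bound $\|\psi\circ f\|_{H^{s,p}}\le C\|\psi\|_{C^k}\|f\|_{H^{s,p}}$.
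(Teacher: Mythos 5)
Your proposal follows essentially the same route the paper indicates: the paper gives no detailed proof, but reduces the lemma to controlling $\|(D^k\psi)\circ f\|_{H^{\sigma,p}}$ for $\sigma\in(0,1)$ via \cite[Thm.~5.3.6/1~(i)]{RS}, which is exactly the key composition estimate you invoke after the Fa\`a di Bruno expansion and the fractional Leibniz rule. The bookkeeping you describe is the standard way to carry out that reduction, so the argument is correct and matches the paper's intended proof.
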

\end{appendix}

%%%%%%%%%%%%%%%%%%%%%%%%%
% Bibliography

\end{document}